\newcommand{\Supp}{\operatorname{supp}}
\newcommand{\Span}{\operatorname{span}}
\theoremstyle{plain}
\newtheorem{Theorem}{Theorem}
\newtheorem{Lemma}[Theorem]{Lemma}
\newtheorem{Corollary}[Theorem]{Corollary}
\newtheorem{Proposition}[Theorem]{Proposition}
\newtheorem{Definition}[Theorem]{Definition}
\newtheorem{Notation}[Theorem]{Notation}
\newtheorem{OpenProblem}{Open Problem}
\theoremstyle{definition}
\newtheorem{Remark}[Theorem]{Remark}
\newtheorem{Example}[Theorem]{Example}
\theoremstyle{plain} 
\title{Cyclicity in Poletsky-Stessin Weighted Bergman Spaces}
\author{
Pouriya Torkinejad Ziarati \\
\small Institut de Mathématiques de Toulouse, Université Toulouse III \\
\small Toulouse, France \\
\small \texttt{pouriya.torkinejad\_ziarati@math.univ-toulouse.fr}
}
\date{} 
\begin{document}

\maketitle\
\begin{abstract}
We study the cyclicity of polynomials in Poletsky-Stessin weighted Bergman spaces on various domains in $\mathbb{C}^2$, including the unit ball, the bidisk, and the complex ellipsoid. To this end, we introduce a natural extension of the parameter range for Poletsky-Stessin weighted Bergman spaces on complete Reinhardt domains, yielding a family of spaces that resemble Dirichlet-type spaces on the unit ball. We highlight the differences in the cyclicity behavior of polynomials in these spaces on the bidisk compared to those studied by Bénéteau et al. Finally, we propose several open problems concerning the structure of cyclic polynomials in these spaces.
\end{abstract}

\section{Introduction}

Given a Banach space $\mathcal{A}$ of holomorphic functions on a domain in $\mathbb{C}^n$ and a function $f \in \mathcal{A}$, $f$ is called \emph{cyclic} when the span of polynomial multiples of $f$ is dense in $\mathcal{A}$. If polynomials are dense in $\mathcal{A}$ and act as multipliers for $\mathcal{A}$—that is, if for any polynomial $p$, we have $ph \in \mathcal{A}$ whenever $h \in \mathcal{A}$—an equivalent characterization of cyclicity is that there exists a sequence of polynomials $p_n$ such that $\|p_n f - 1\|_{\mathcal{A}} \rightarrow 0$. The problem of characterizing cyclic functions is highly space-dependent and remains challenging—even in Hilbert spaces of holomorphic functions on the unit disk. For instance, complete classifications of cyclic functions, such as Beurling’s theorem for the Hardy space of the unit disk \cite{Beurling1949two}, are rare. The Brown–Shields conjecture, posed in 1984 for the Dirichlet space, remains open \cite{BS84}. Recent studies on cyclicity in spaces of holomorphic functions on the unit disk—especially Dirichlet-type spaces—as well as partial results related to the Brown–Shields conjecture, appear in \cite{Beneteau2015_One_Variable}, \cite{ElFallah_BS_Conj}, \cite{Kellay_Cyc_2020}, \cite{ELFALLAH2016_cyc}. Additionally, \cite{ElFallah2014_book} provides a comprehensive textbook treatment of Dirichlet-type spaces.

In several variables, a complete characterization of cyclic functions in Dirichlet-type spaces remains elusive for classical domains such as the unit ball and the polydisk. However, significant progress has been made in identifying cyclic polynomials, with explicit characterizations obtained in the unit ball of dimension 2 and the bidisk \cite{Beneteau2016_bidisk}, \cite{Kosinski_Vavitsas2023}, \cite{Knese2019_Aniso_Bidisk}.

Weighted Bergman spaces on the unit ball are initially defined via an integral norm and for a limited range of parameters. However, they can be naturally extended to cover the Dirichlet-type spaces of the unit ball, as discussed in \cite{ZhaoZhu2008}. In this work, we adopt the definition introduced by Poletsky and Stessin for weighted Bergman spaces on a hyperconvex domain using an exhaustion function, and we demonstrate that this definition can be extended naturally to a wider range of parameters whenever the domain is complete Reinhardt. On the unit ball, this construction recovers the classical Dirichlet-type spaces, and the corresponding norm varies continuously under continuous changes in the domain, in the sense of varying the exhaustion function. Although it can be shown that on the bidisk this construction does not reproduce the Dirichlet-type spaces studied in \cite{Beneteau2016_bidisk} and \cite{Bergqvist2018}, an important observation is that our results concerning the cyclicity of polynomials on the bidisk depend solely on the dimension of the zero set on the distinguished boundary—similar to the case of the unit ball \cite{Kosinski_Vavitsas2023}. This contrasts with the spaces in \cite{Beneteau2016_bidisk}, where polynomials of the form $\zeta - z_i$ must be treated separately when categorizing cyclic polynomials whose zero set has dimension one on the distinguished boundary.

The structure of the paper is as follows. In Section 2, we recall the original definition of Poletsky and Stessin and apply it to several examples of domains, while also providing the necessary background for the rest of the paper. A precise definition and first properties of cyclicity are given in Section~2.4. Section 3 establishes some general results and extends Poletsky-Stessin Weighted Bergman Spaces to more general indices. The motivation here is to construct a family of spaces on complete Reinhardt domains that inherit properties of Dirichlet-type spaces. In Sections 4 and 5, we study the cyclicity problem in Poletsky-Stessin Weighted Bergman Spaces on the complex ellipsoid and the bidisk, respectively. Finally, in Section 6, we propose several open problems for future research.

\section{Preliminaries}
\subsection{Notations}
\begin{Notation}
    Let $\Omega \subseteq \mathbb{C}^n$ be a domain. We denote by $\mathcal{O}(\Omega)$ the set of all holomorphic functions \( f : \Omega \to \mathbb{C} \).
\end{Notation}
\begin{Notation}
    We write $A \lesssim B$ to indicate that there exists a constant $C > 0$ such that $A \leq C B$. If both $A \lesssim B$ and $B \lesssim A$ hold, we write $A \asymp B$. The dependence of the constant $C$ on parameters will be specified whenever necessary.
\end{Notation}
\subsection{Poletsky-Stessin Hardy Spaces}
\begin{Definition}
    If $\Omega \subset \mathbb{C}^n$ is a hyperconvex domain equipped with a plurisubharmonic exhaustion function $u: \Omega \rightarrow [-\infty, 0)$, then the Poletsky-Stessin Hardy space on $\Omega$ with respect to $u$ is defined as follows \cite{PS_HardyBergman}:
    \begin{equation}
        H^2_u = \{ f \in \mathcal{O}(\Omega) : \mu_u(|f|^2) < +\infty \}.
    \end{equation}
    Here, $\mu_u$ is the weak-\(*\) limit of the Monge–Ampère measures $\mu_{u_r}$ as $r \to 0^-$, defined below, and $u_r = \max \{ u , r \}$:
    \begin{equation}
        \mu_{u_r} = (dd^c u_r)^n - \chi_{\Omega \setminus B_r} (dd^c u)^n.
    \end{equation}
    The sets $B_r(u)$ and $S_r(u)$ are defined as:
    \begin{equation*}
        B_r(u) = \{ z \in \Omega : u(z) < r  \}, \quad S_r(u) = \{ z \in \Omega : u(z) = r  \}.
    \end{equation*}
\end{Definition}
The following lemma will be used to construct several examples.
\begin{Lemma} \cite{Rudin2008}
    Let $\sigma$ and $v$ be the normalized surface and volume measures for $\mathbb{S}_n$ and $\mathbb{B}_n$, respectively. Assume that $\gamma \in (-1, +\infty)^n$. Then we have:
    \begin{align}
        \int_{\mathbb{S}_n} |\zeta^\gamma|^2 d\sigma(\zeta) &= \frac{\Gamma(n) \Gamma(\gamma_1 + 1) \dots \Gamma(\gamma_n + 1)}{\Gamma(\gamma_1 + \dots + \gamma_n + n)}, \\[8pt]
        \int_{\mathbb{B}_n} |z^\gamma|^2 dv(z) &= \frac{\Gamma(n+1) \Gamma(\gamma_1 + 1) \dots \Gamma(\gamma_n + 1)}{\Gamma(\gamma_1 + \dots + \gamma_n + n+1)}.
    \end{align}
\end{Lemma}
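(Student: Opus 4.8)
The plan is to deduce both identities from a single Gaussian integral, after first reducing the ball formula to the sphere formula. Write $|\gamma| := \gamma_1 + \dots + \gamma_n$ and observe that $|z^\gamma|^2 = |z_1|^{2\gamma_1}\cdots|z_n|^{2\gamma_n}$ is a nonnegative measurable function on $\mathbb{C}^n$, homogeneous of degree $2|\gamma|$. Integrating in polar coordinates on the ball, with the normalizations $v(\mathbb{B}_n)=1$ and $\sigma(\mathbb{S}_n)=1$, gives $\int_{\mathbb{B}_n}|z^\gamma|^2\,dv = 2n\big(\int_0^1 r^{2n-1+2|\gamma|}\,dr\big)\int_{\mathbb{S}_n}|\zeta^\gamma|^2\,d\sigma = \frac{n}{n+|\gamma|}\int_{\mathbb{S}_n}|\zeta^\gamma|^2\,d\sigma$, and a direct manipulation with $\Gamma(x+1)=x\Gamma(x)$ shows $\frac{\Gamma(n+1)\,\Gamma(\gamma_1+1)\cdots\Gamma(\gamma_n+1)}{\Gamma(|\gamma|+n+1)} = \frac{n}{n+|\gamma|}\cdot\frac{\Gamma(n)\,\Gamma(\gamma_1+1)\cdots\Gamma(\gamma_n+1)}{\Gamma(|\gamma|+n)}$. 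Hence it suffices to establish the first identity.

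To prove the sphere formula, I would compute $I := \int_{\mathbb{C}^n}|z^\gamma|^2\,e^{-\pi|z|^2}\,dm$, where $dm$ denotes Lebesgue measure, in two different ways. On one hand, by Tonelli the integrand is a product of one-variable factors, and the one-variable computation (polar coordinates together with the substitution $t = \pi|w|^2$) gives $\int_{\mathbb{C}}|w|^{2\gamma_j}e^{-\pi|w|^2}\,dm(w) = \pi^{-\gamma_j}\Gamma(\gamma_j+1)$, so that $I = \pi^{-|\gamma|}\,\Gamma(\gamma_1+1)\cdots\Gamma(\gamma_n+1)$. On the other hand, integrating in polar coordinates on $\mathbb{C}^n \cong \mathbb{R}^{2n}$ and using homogeneity yields $I = \big(\int_0^\infty r^{2n-1+2|\gamma|}e^{-\pi r^2}\,dr\big)\int_{\mathbb{S}_n}|\zeta^\gamma|^2\,d\sigma_0$, where $\sigma_0$ is the unnormalized surface measure, and the same substitution evaluates the radial integral as $\frac12\pi^{-(n+|\gamma|)}\Gamma(n+|\gamma|)$. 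Comparing the two expressions gives $\int_{\mathbb{S}_n}|\zeta^\gamma|^2\,d\sigma_0 = 2\pi^n\,\frac{\Gamma(\gamma_1+1)\cdots\Gamma(\gamma_n+1)}{\Gamma(n+|\gamma|)}$; specializing to $\gamma = 0$ gives $\sigma_0(\mathbb{S}_n) = 2\pi^n/\Gamma(n)$, and dividing by this total mass passes to the normalized measure $\sigma$ and produces exactly the claimed identity.

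I do not expect a genuine obstacle here — the argument is a standard Gaussian-integral computation, and every interchange of integration is justified by nonnegativity of the integrands. The only points that require attention are the role of the hypothesis $\gamma \in (-1,\infty)^n$, which is precisely what makes each one-variable and radial integral converge (and the monomials integrable on $\mathbb{S}_n$ and $\mathbb{B}_n$), and careful bookkeeping of the normalization constants so that $\sigma(\mathbb{S}_n) = v(\mathbb{B}_n) = 1$ throughout. Since this is classical, it is legitimate to simply invoke \cite{Rudin2008}, whose statement already uses these normalizations.
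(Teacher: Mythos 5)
Your proof is correct, and it is essentially the classical argument (the Gaussian-integral computation of Proposition 1.4.9 in \cite{Rudin2008}) that the paper invokes by citation without reproducing; all the normalization constants and the role of the hypothesis $\gamma\in(-1,\infty)^n$ check out.
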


Here, we compute the norm of monomials in certain function spaces.

\begin{Example}
    Let $\Omega_P = \{z \in \mathbb{C}^n : |z_1|^{2 p_1 } + \dots + |z_n|^{2 p_n } < 1\}$, where $p_i$'s are positive integers and define
    \[
    u(z) = \frac{1}{2(p_1 \dots p_n)^{1/n}} \log (|z_1|^{2 p_1 } + \dots + |z_n|^{2 p_n }).
    \]
    Note that since $u$ is a maximal plurisubharmonic function, we have $\mu_{u_r} = (dd^c u_r)^n$, which is supported on $S_r(u)$. Consequently, one can compute the norm of a monomial $z^L$, where $L$ is a multi-index, as follows:
    \begin{align*}
        \mu_{u}(|z^L|^{2}) 
        &= \lim_{r \to 0^-} \mu_{u_r} (|z^L|^{ 2})  \\
        &= \int_{S_r(u)} |z^L|^{ 2} (dd^c u_r)^n(z) \\
        &= \frac{p_1 \ldots p_n}{p_1 \ldots p_n} \int_{\mathbb{S}_n} |\zeta_1|^{2 \frac{l_1}{p_1}} \dots |\zeta_n|^{2 \frac{l_n}{p_n}} d\sigma(\zeta) \\
        &= \frac{\Gamma(n) \Gamma\left(\frac{l_1}{p_1} + 1\right) \dots \Gamma\left(\frac{l_n}{p_n} + 1\right)}{\Gamma\left(\frac{l_1}{p_1} + \dots + \frac{l_n}{p_n} + n\right)}.
    \end{align*}
    Here, in the third equality, we used the $(p_1 \dots p_n)$-sheeted change of variables $z_i \mapsto z_i^{p_i}$ (see Corollary~{7.4} in \cite{DBook}), along with the fact that $\mu_v = \sigma$ for $v(z) = \log |z|$.
\end{Example}

In the previous example, we observe that when $p_i = 1$ for all $i$, we obtain the same norm as in the classical Hardy space on the ball. Similarly, as $p_i \to \infty$ for all $i$, we recover the monomial norm for the classical Hardy space on the polydisc.

\begin{Example}
    Let $\Omega_\lambda = \{z \in \mathbb{C}^2 :  |z_1|<1 ,\ |z_2| <1,\ \lambda |z_1^m z_2^n| <1 \}$ and define
    \begin{equation*}
        u(z) = C \log \max \{ |z_1|, |z_2|, \lambda |z_1^m z_2^n| \},
    \end{equation*}
    where $\lambda > 1$ and $C>0$ is chosen such that $\mu_u (1) = 1$.

    By a similar argument to Example 6.11 in \cite{DBook}, $\mu_u$ is supported on 
    \begin{equation*}
        T_1 = \{ z \in \mathbb{C}^2 : |z_1| = (1/\lambda)^{1/m} ,\ |z_2| = 1 \}, \quad
        T_2 = \{ z \in \mathbb{C}^2 : |z_1| = 1,\ |z_2| = (1/\lambda)^{1/n} \}.
    \end{equation*}
    We have
    \begin{equation*}
        \mu_u = C_1 \mu_1 + C_2 \mu_2,
    \end{equation*}
    where $\mu_i$ is the uniform probability measure supported on $T_i$. Note that:
    \begin{equation*}
        \mu_u(1) = 1 = C_1 \mu_1(1) + C_2 \mu_2(1) = C_1 + C_2,
    \end{equation*}
    Now, when $z \in \Omega_\lambda$ is sufficiently close to $T_1$, we have:
    \begin{equation*}
        u(z) = u^1(z) = C \log \max \{ |z_2|, \lambda |z_1^m z_2^n| \}.
    \end{equation*}
    Hence, if we set $\tilde{z_1} = z_1^m z_2^n$ and $\tilde{z_2} = z_2$, then:
    \begin{equation*}
        C_1 = \lim_{r \to 0^-} \mu_{u^1_r}(S_r(u^1)) = \lim_{r \to 0^-} \int_{S_r(u^1)} (d d^c u^1_r)^2(z) 
        = \lim_{r \to 0^-} m C^2 \int_{S_{\tilde{r}}(V)} (d d^c V_{\tilde{r}})^2(\tilde{z}) =  m \lambda C^2.
    \end{equation*}
    Here, $V(z) = \log \max \{ \lambda |z_1|, |z_2| \}$, and $\tilde{r} = r / C$. Similarly, we obtain:
    \begin{equation*}
        C_2  = n \lambda C^2.
    \end{equation*}
    Hence we have $C_1 = \frac{m}{m + n}$ and $C_2 = \frac{n}{m + n}$, the norm of the monomial $z_1^k z_2^l$ then can be computed as:
    \begin{equation*}
        \| z_1^k z_2^l \|^2 = \frac{m \lambda^{- \frac{2k}{m}} + n \lambda^{- \frac{2l}{n}} }{m  + n }.
    \end{equation*}
\end{Example}

\subsection{Poletsky-Stessin Weighted Bergman Spaces}

\begin{Definition}
    If $\Omega \subset \mathbb{C}^n$ is a hyperconvex domain equipped with a plurisubharmonic exhaustion function $u$, then for $\alpha > -1$, the Poletsky-Stessin Weighted Bergman Space on $\Omega$ with respect to $u$ is defined as follows \cite{PS_HardyBergman}:
    \begin{equation}
        A^2_{u,\alpha} = \left\{ f \in \mathcal{O}(\Omega) \; : \; \int_{-\infty}^{0} |r|^\alpha e^r \mu_{u_r}(|f|^2) \, dr < +\infty \right\}.
    \end{equation}
\end{Definition}

\begin{Example} \label{Omega_P example}
    Let $\Omega_P = \{z \in \mathbb{C}^n : |z_1|^{2 p_1 } + \dots + |z_n|^{2 p_n } < 1\}$ and define
    \begin{equation*}
        u(z) = \frac{1}{2(p_1 \cdots p_n)^{1/n}} \log \left( |z_1|^{2 p_1 } + \dots + |z_n|^{2 p_n } \right).
    \end{equation*}

    Let $p^n = p_1 \cdots p_n$. Then, the norm of the monomial $z^L$, where $L$ is a multi-index, can be computed as follows:
    \begin{equation*}
    \begin{aligned}
        \|z^L\|^2_{A^2_{u,\alpha}} 
        &= \int_{-\infty}^{0} |r|^\alpha e^r \mu_{u_r}(|z^L|^2) \, dr \\
        &= \int_{-\infty}^{0} |r|^\alpha e^r \int_{S_r(u)} |z^L|^2 (dd^c u_r)^n(z) \, dr \\
        &= \frac{p_1 \ldots p_n}{p_1 \ldots p_n} \int_{-\infty}^{0} |r|^\alpha e^r \int_{S_{r'}(v)} |z'_1|^{\frac{2 l_1}{p_1}} \cdots |z'_n|^{\frac{2 l_n}{p_n}} (dd^c v_{r'})^n(z') \, dr \\
        &= \int_{-\infty}^{0} |r|^\alpha e^r \int_{S_{r'}(v)} |z'_1|^{\frac{2 l_1}{p_1}} \cdots |z'_n|^{\frac{2 l_n}{p_n}} \, d\sigma_R(z') \, dr \\
    \end{aligned}
    \end{equation*}
    Here, we applied the $p_1 \ldots p_n$-sheeted change of variable $z_i' = z_i^{p_i}$, $v(z') = \frac{1}{2} \log (|z_1'|^2 + \dots + |z_n'|^2)$, $r' = p r$, and $R = e^{r'}$. The measure $\sigma_R$ is the normalized surface measure on $S_{r'}(v) = \left\{ z \in \mathbb{C}^n \mid |z| = R \right\}$. We continue as:
    \begin{equation*}
    \begin{aligned}
        \|z^L\|^2_{A^2_{u,\alpha}} 
        &= \int_{-\infty}^{0} |r|^\alpha e^r \int_{\mathbb{S}_n} |R\zeta_1|^{\frac{2 l_1}{p_1}} \cdots |R\zeta_n|^{\frac{2 l_n}{p_n}} \, d\sigma(\zeta) \, dr \\
        &= \frac{\Gamma(n) \Gamma \left( \frac{l_1}{p_1} + 1 \right) \cdots \Gamma \left( \frac{l_n}{p_n} + 1 \right)}
        {\Gamma \left( \frac{l_1}{p_1} + \cdots + \frac{l_n}{p_n} + n \right)} 
        \int_{-\infty}^{0} |r|^\alpha e^r R^{2\left(\frac{l_1}{p_1} + \cdots + \frac{l_n}{p_n}\right)} \, dr \\
        &= \frac{\Gamma(n) \Gamma \left( \frac{l_1}{p_1} + 1 \right) \cdots \Gamma \left( \frac{l_n}{p_n} + 1 \right)}
        {\Gamma \left( \frac{l_1}{p_1} + \cdots + \frac{l_n}{p_n} + n \right)} 
        \int_{0}^{\infty} r^\alpha e^{-(2p(\frac{l_1}{p_1} + \cdots + \frac{l_n}{p_n})+1)r} \, dr \\
        &= \frac{ \Gamma(n) \Gamma \left( \frac{l_1}{p_1} + 1 \right) \cdots \Gamma \left( \frac{l_n}{p_n} + 1 \right) \Gamma(\alpha+1)}
        {\Gamma \left( \frac{l_1}{p_1} + \cdots + \frac{l_n}{p_n} + n \right)} 
        \left(2p\left(\frac{l_1}{p_1} + \cdots + \frac{l_n}{p_n}\right)+1\right)^{-(\alpha+1)}.
    \end{aligned}
    \end{equation*}
\end{Example}

\begin{Example} [Bergman Space]
    Let $\Omega_P$ be as in Example~\ref{Omega_P example}. We may compute the norm of $z^L$ as follows:
    \begin{equation*}
    \begin{aligned}
        \|z^L\|_{A^2(\Omega_P)}^2 
        &= \int_{\Omega_P} |z^L|^2 \, dv_P(z) \\
        &= \frac{\Gamma \left( \frac{1}{p_1} + \dots + \frac{1}{p_n} + 1 \right)}
        {\Gamma(n+1) \Gamma \left( \frac{1}{p_1} \right) \cdots \Gamma \left( \frac{1}{p_n} \right)}
        \int_{\mathbb{B}_n} |z_1|^{2\left(\frac{l_1+1}{p_1} - 1\right)} 
        \cdots |z_n|^{2\left(\frac{l_n+1}{p_n} - 1\right)} \, dv(z) \\
        &= \frac{\Gamma \left( \frac{1}{p_1} + \dots + \frac{1}{p_n} + 1 \right)}
        {\Gamma \left( \frac{1}{p_1} \right) \cdots \Gamma \left( \frac{1}{p_n} \right)}
        \frac{\Gamma \left( \frac{l_1 + 1}{p_1} \right) \cdots \Gamma \left( \frac{l_n + 1}{p_n} \right)}
        {\Gamma \left( \frac{l_1 + 1}{p_1} + \dots + \frac{l_n + 1}{p_n} + 1 \right)}.
    \end{aligned}
    \end{equation*}
    Here, $v_P$ and $v$ denote the normalized volume measures for $\Omega_P$ and $\mathbb{B}_n$, respectively.
\end{Example}

\begin{Definition} 
    A possible modification to the definition of Poletsky-Stessin Weighted Bergman Spaces is given by:
    \begin{equation}
    \begin{aligned}
        ||f||^2_{A^2_{u,\alpha}} &= \frac{(2n)^{\alpha+1}}{\Gamma(\alpha+1)} 
        \int_{-\infty}^{0} |r|^\alpha e^{2nr} \mu_{u_r}(|f|^2) \, dr.
    \end{aligned}
    \end{equation}
\end{Definition}
This definition yields the classical Weighted Bergman Spaces in the case of the unit ball when one chooses $u(z) = \frac{1}{2} \log \left( |z_1|^2 + \dots + |z_n|^2 \right)$ as the exhaustion function.

\begin{Example}
    Let $\Omega_P$ and $u$ be as in the previous example. Then, the norm of $z^L$ with the modified definition is given by:
    \begin{equation} \label{Ellipsoid_NORM}
    \begin{aligned}
        ||z^L||^2_{A^2_{u,\alpha}} &=\dfrac{ n^{\alpha+1} \Gamma(n) 
        \Gamma \left( \frac{l_1}{p_1} + 1 \right) \cdots 
        \Gamma \left( \frac{l_n}{p_n} + 1 \right)}
        {\Gamma \left( \frac{l_1}{p_1} + \dots + \frac{l_n}{p_n} + n \right)}
         \left( p \left( \frac{l_1}{p_1} + \dots + \frac{l_n}{p_n} \right) + n \right)^{-(\alpha+1)}.
    \end{aligned}
    \end{equation}
    In this way, one may define $A^2_{u,\alpha}(\Omega_P)$ for $\alpha < -1$ using the norm expressed as a series. 
    Moreover, in the case of the unit ball and $\alpha = -(\beta+1)$, these spaces coincide with the Dirichlet-type space $D_{\beta} (\mathbb{B}_n)$ studied in \cite{Vavitsas2}, \cite{Kosinski_Vavitsas2023}.
\end{Example}

\begin{Example}[Poletsky-Stessin Weighted Bergman Spaces on the Polydisk]
Let $f \in \mathcal{O}(\mathbb{D}^n)$ be a holomorphic function on the polydisk $\mathbb{D}^n$, written in the form of a series as:
\[
f(z) = \sum_{L} a_L z^L,
\]
where $L = (l_1, \ldots, l_n)$ is a multi-index of nonnegative integers and $z^L = z_1^{l_1} z_2^{l_2} \cdots z_n^{l_n}$. Also, let $u(z) = \log \max \{|z_1|,\ldots,|z_n| \}$ be the exhaustion function for the domain. The norm of $f$ in the Poletsky-Stessin Weighted Bergman Space $A^2_{u,\alpha}(\mathbb{D}^n)$ is given by:
\[
||f||^2_{A^2_{u,\alpha}(\mathbb{D}^n)} = \sum_{L} |a_L|^2 (|L| + n)^{-(\alpha+1)},
\]
where $|L| = l_1 + l_2 + \cdots + l_n$ is the length of the multi-index $L$. Note that other than direct computation this norm can also be obtained by letting $p_i = p \rightarrow \infty$  $ \forall\  1\leq i \leq n$ in \eqref{Ellipsoid_NORM}. For the case of $\alpha \leq -1$, we can take the norm in the form of a power series as the definition of the space.
This is clearly different than the classical Weighted Bergman Spaces $A^2_{\alpha}(\mathbb{D}^n)$ given by the norm:
\[
||f||^2_{A^2_{\alpha}(\mathbb{D}^n)} = \sum_{L} |a_L|^2 (l_1 + 1)^{-(\alpha+1)} \ldots (l_n + 1)^{-(\alpha+1)},
\]
\end{Example}

\subsection{Basic Results of Cyclicity}
In this subsection, we present the definition of cyclicity along with fundamental results that are essential for understanding the rest of the paper as we use them several times throughout the paper. While these results hold in more general settings, we formulate them in the context of the function spaces studied here for convenience.

\begin{Definition}
    Let $A^2_{u,\alpha}$ be a Poletsky-Stessin Weighted Bergman Space on a domain $\Omega \subset \mathbb{C}^n$, a function $f \in A^2_{u,\alpha}$ is called a \emph{shift-cyclic vector} if:
    \begin{equation}
        [f] = \overline{\Span \{ p f : p \in \mathbb{C}[z_1,\ldots,z_n] \}} = A^2_{u,\alpha}.
    \end{equation}
\end{Definition}
\begin{Theorem} \label{PEF_non_cyc}
    If $f \in A^2_{u,\alpha}(\Omega)$ vanishes at $\zeta \in \Omega$ then $f$ is not cyclic.
\end{Theorem}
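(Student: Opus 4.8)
The plan is to exhibit a proper closed subspace of $A^2_{u,\alpha}(\Omega)$ that contains $[f]$, namely the kernel of the evaluation functional $\delta_\zeta\colon g\mapsto g(\zeta)$. The essential preliminary fact is that $A^2_{u,\alpha}(\Omega)$ is a Hilbert space on which evaluation at every point of $\Omega$ is continuous (a functional Hilbert space). When $\alpha>-1$ this is part of the Poletsky-Stessin construction \cite{PS_HardyBergman}: the squared norm equals $\int_\Omega|g|^2\,d\nu$ for the finite positive Borel measure $\nu=\int_{-\infty}^{0}|r|^\alpha e^r\mu_{u_r}\,dr$ on $\Omega$, and $A^2_{u,\alpha}(\Omega)=\mathcal{O}(\Omega)\cap L^2(\Omega,\nu)$ is complete with bounded point evaluations. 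On the complete Reinhardt domains of interest one can instead argue directly and uniformly in $\alpha$ using the orthogonal monomial ``basis'': the norm is the weighted $\ell^2$-norm $\|g\|^2=\sum_L|a_L|^2 w_L$ of the Taylor coefficients of $g$, with weights $w_L>0$ read off from \eqref{Ellipsoid_NORM} and its polydisk analogue. For a fixed interior point $\zeta\in\Omega$ the series $\sum_L w_L^{-1}|\zeta^L|^2$ converges — possible exponential decay of $w_L$ enters only through the $\Gamma$-quotient and is absorbed because the defining inequality of $\Omega$ forces the geometric mean $|\zeta^L|^{1/|L|}$ to stay strictly below the corresponding threshold at an interior point — so Cauchy--Schwarz gives $|g(\zeta)|\le\bigl(\sum_L w_L^{-1}|\zeta^L|^2\bigr)^{1/2}\|g\|$; letting $\zeta$ range over a compact set shows in addition that $A^2_{u,\alpha}(\Omega)$ is complete.

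Next one records that the constant function $1$ lies in $A^2_{u,\alpha}(\Omega)$: for $\alpha>-1$ one has $\|1\|^2=\int_{-\infty}^{0}|r|^\alpha e^r\,\mu_{u_r}(\Omega)\,dr\le\mu_u(\Omega)\,\Gamma(\alpha+1)<\infty$, and in the extended range $\|1\|^2=w_0<\infty$. Now assemble: since $\delta_\zeta$ is continuous, $\ker\delta_\zeta=\{g\in A^2_{u,\alpha}(\Omega):g(\zeta)=0\}$ is a closed subspace, and it is proper because $1\notin\ker\delta_\zeta$. For every polynomial $p$ with $pf\in A^2_{u,\alpha}(\Omega)$ we have $(pf)(\zeta)=p(\zeta)f(\zeta)=0$, so $\Span\{pf:p\in\mathbb{C}[z_1,\dots,z_n]\}\subseteq\ker\delta_\zeta$, and taking closures yields $[f]\subseteq\ker\delta_\zeta\subsetneq A^2_{u,\alpha}(\Omega)$. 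Hence $f$ is not cyclic.

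The only step requiring genuine work is the boundedness of $\delta_\zeta$ in the extended range $\alpha\le-1$: one must check that $w_L^{-1}|\zeta^L|^2$ decays geometrically in $|L|$, which, after applying Stirling's formula to the $\Gamma$-factors, reduces to the strict geometric-mean inequality satisfied by an interior point $\zeta$. Everything else is the classical observation that a cyclic vector cannot vanish at an interior point of the domain.
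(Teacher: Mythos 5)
Your proposal is correct and follows essentially the same route as the paper: the paper's proof likewise observes that the point evaluation functional $\Gamma_\zeta$ is bounded, hence has a closed kernel containing every $pf$, so $[f]\subseteq\operatorname{Ker}\Gamma_\zeta\subsetneq A^2_{u,\alpha}$. The extra material you supply (the Cauchy--Schwarz verification that $\delta_\zeta$ is bounded in the extended parameter range, and the check that the kernel is proper) only fills in details the paper leaves implicit.
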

\begin{proof}
    If we define $\Gamma_\zeta: A^2_{u,\alpha} \rightarrow \mathbb{C}$ by $\Gamma_\zeta(f) = f(\zeta)$, then $\Gamma_\zeta$ is a bounded linear functional and consequently has a closed kernel. For any polynomial $p$, we have $pf \in \operatorname{Ker} \Gamma_\zeta$. This implies:
    \begin{equation*}
        [f] \subseteq  \operatorname{Ker} \Gamma_\zeta \subsetneq A^2_{u,\alpha}. \qedhere
    \end{equation*}
\end{proof}
\begin{Remark}
    In the proof of Theorem~\ref{PEF_non_cyc}, we used the fact that the point evaluation functional $\Gamma_\zeta$ is continuous for any point $\zeta$ in the domain. In fact, the same proof applies whenever the point evaluation functional is bounded at some point in the closure of the domain, which occurs for certain values of $\alpha$ in the spaces we study.
\end{Remark}
The following theorem offers a useful sufficient criterion to check whether a function is cyclic. A complete proof can be found in \cite{Knese2019_Aniso_Bidisk}.
\begin{Theorem} \label{Radial_Dilation}
    Let $f \in A^2_{u,\alpha}(\Omega)$ and assume that $f$ does not vanish on $\Omega$. For $0 < r < 1$, define the radial dilation
    \begin{equation*}
        f_r (z) = f(rz).
    \end{equation*}
    If
    \begin{equation*}
        \sup_{0 < r < 1} \left\| \frac{f}{f_r} \right\|_{A^2_{u,\alpha}} < +\infty,
    \end{equation*}
    then $f$ is cyclic in $A^2_{u,\alpha}(\Omega)$.
\end{Theorem}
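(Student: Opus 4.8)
The plan is to prove the stronger statement that the constant function $1$ belongs to $[f]$. This suffices: each coordinate function $z_i$ is a bounded multiplier of $A^2_{u,\alpha}(\Omega)$ since $\mu_{u_r}(|z_i g|^2)\le(\sup_{\Omega}|z_i|)^2\,\mu_{u_r}(|g|^2)$ on the bounded domain $\Omega$, hence every polynomial is a bounded multiplier; therefore $[f]$, being a closed subspace, is invariant under multiplication by polynomials, so $1\in[f]$ forces $\mathbb{C}[z_1,\dots,z_n]\subseteq[f]$, and since polynomials are dense in $A^2_{u,\alpha}(\Omega)$ this yields $[f]=A^2_{u,\alpha}(\Omega)$, i.e.\ $f$ is cyclic.

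First I would fix $0<r<1$ and show $f/f_r\in[f]$. As $\Omega$ is complete Reinhardt it is star-shaped about the origin, so $\overline{\Omega}\subset\tfrac1r\Omega$, and hence $f_r(z)=f(rz)$ is holomorphic on a neighbourhood of $\overline{\Omega}$; since $f$ does not vanish on $\Omega$, the function $1/f_r$ is holomorphic there as well. Picking $s$ with $1<s<1/r$, the compact set $\overline{s\Omega}$ is polynomially convex (it is the closure of a bounded complete Reinhardt domain of holomorphy) and $\overline{\Omega}\subset\overline{s\Omega}\subset\tfrac1r\Omega$, so by the Oka--Weil theorem there are polynomials $p_k$ with $p_k\to 1/f_r$ uniformly on $\overline{\Omega}$. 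Because each measure $\mu_{u_r}$ is finite and $|p_kf-f/f_r|^2\le\|p_k-1/f_r\|_{\infty,\overline{\Omega}}^2\,|f|^2$ pointwise, one obtains $\|p_kf-f/f_r\|_{A^2_{u,\alpha}}\le\|p_k-1/f_r\|_{\infty,\overline{\Omega}}\,\|f\|_{A^2_{u,\alpha}}\to 0$, so $f/f_r\in A^2_{u,\alpha}(\Omega)$ and $f/f_r\in[f]$ for every $r\in(0,1)$.

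Next I would pass to the limit $r\to1^-$. By hypothesis the family $\{f/f_r:0<r<1\}$ is bounded in the Hilbert space $A^2_{u,\alpha}(\Omega)$, hence weakly sequentially precompact: there is a sequence $r_n\to1^-$ with $f/f_{r_n}\rightharpoonup g$ for some $g\in A^2_{u,\alpha}(\Omega)$. Point evaluation at any interior point $\zeta\in\Omega$ is a bounded linear functional (as in Theorem~\ref{PEF_non_cyc}), so $g(\zeta)=\lim_n (f/f_{r_n})(\zeta)$; but $f_{r_n}(\zeta)=f(r_n\zeta)\to f(\zeta)\ne 0$ by continuity of $f$, whence $(f/f_{r_n})(\zeta)\to1$ and therefore $g\equiv1$ on $\Omega$. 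Since $[f]$ is a closed subspace it is weakly closed, and each $f/f_{r_n}\in[f]$, so $1=g\in[f]$, which finishes the proof.

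The norm estimates in the fixed-$r$ step are routine; the point requiring the most care is the limiting argument — extracting the weakly convergent subsequence and identifying its weak limit through the boundedness of interior point evaluations — together with the Oka--Weil approximation, which is precisely where the complete Reinhardt structure of $\Omega$ (star-shapedness and polynomial convexity of the relevant compacta) enters.
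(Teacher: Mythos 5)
Your overall architecture is the standard one (and the one used in the reference the paper cites for this result): reduce to showing $1\in[f]$, show $f/f_r\in[f]$ for each fixed $r$ by polynomial approximation of $1/f_r$, then use the uniform bound to extract a weak limit, identify it as $1$ via bounded interior point evaluations, and invoke weak closedness of the closed subspace $[f]$. The limiting step is fine. The problem is in the fixed-$r$ step.

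The gap is the inequality $\|p_kf-f/f_r\|_{A^2_{u,\alpha}}\le\|p_k-1/f_r\|_{\infty,\overline{\Omega}}\,\|f\|_{A^2_{u,\alpha}}$, which you derive by dominating $|p_kf-f/f_r|^2$ pointwise and integrating against the positive measures $|r|^\alpha e^r\mu_{u_r}\,dr$. That derivation is only valid when the norm of $A^2_{u,\alpha}$ actually is such an integral, i.e.\ for $\alpha>-1$ (and, with some care about boundary values, $\alpha=-1$). But the theorem is invoked in this paper precisely in the extended range $\alpha<-1$, i.e.\ for $\mathcal{D}_\beta$ with $\beta>0$ (Propositions \ref{f_cyc_ellipsoid}, \ref{cuma lemma}, \ref{1-z_Prop} all use it with $\beta$ as large as $2$), where the norm is the coefficient-weighted $\ell^2$ norm $\sum_L|a_L|^2\|\eta^L\|^2_{A^2_{u,\alpha}}$ defined via the operator $R^{s,t}$. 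Uniform convergence on $\overline{\Omega}$ does not control such norms: already on the disk there are functions in the disk algebra, uniformly approximable by polynomials, that do not even belong to the Dirichlet space, so $\|\cdot\|_{\infty,\overline{\Omega}}$ cannot dominate (a multiple of) the multiplier norm when $\beta>0$. The same issue infects your opening claim that $z_i$ is a multiplier "since $\mu_{u_r}(|z_ig|^2)\le(\sup|z_i|)^2\mu_{u_r}(|g|^2)$"; for $\beta>0$ one must instead argue through the coefficient weights (multiplication by $z_i$ shifts $L\mapsto L+e_i$, and the ratios $\|\eta^{L+e_i}\|/\|\eta^L\|$ are uniformly bounded because the weights grow only polynomially in $|L|$).

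The fix is the one used in the cited source: since $f$ is zero-free on $\Omega$ and $\overline{\Omega}\subset\tfrac1r\Omega$ (your complete-Reinhardt/star-shape observation, which is correct and is indeed where that hypothesis enters), $1/f_r$ is holomorphic on a neighbourhood of $\overline{\Omega}$, hence its Taylor coefficients decay geometrically, $|c_L|\lesssim\rho^{|L|}$ for some $\rho<1$ adapted to the Reinhardt geometry. Because the weights $\|\eta^L\|^2_{A^2_{u,\alpha}}$ are only polynomially large or small in $|L|$, such a function is a multiplier of $A^2_{u,\alpha}$ and its Taylor partial sums $p_k$ converge to it in multiplier norm; then $\|p_kf-f/f_r\|_{A^2_{u,\alpha}}\le\|p_k-1/f_r\|_{\mathrm{Mult}}\|f\|_{A^2_{u,\alpha}}\to0$, which also shows $f/f_r\in A^2_{u,\alpha}(\Omega)$ and $f/f_r\in[f]$. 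With that replacement (and the coefficient-based justification that polynomials are multipliers and are dense, which is immediate since the monomials $\eta^L$ form an orthogonal basis in the extended range), your argument goes through for all $\alpha$.
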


\section{General Results}
\begin{Definition}
    Let $-1 \leq s,t $, and let $\Omega \subseteq \mathbb{C}^n$ be a hyperconvex domain equipped with an exhaustion function $u$. Then, the operator $R^{s,t}: A^2_{u,t}(\Omega) \to A^2_{u,s}(\Omega)$
    is defined as follows:
    \begin{equation*}
    (R^{s,t} f) (z) = \sum_{L} a_L 
    \frac{\|\eta^L\|_{A^2_{u,t}}}{\|\eta^L\|_{A^2_{u,s}}} z^L.
    \end{equation*}
    Here, $f(z) = \sum_{L} a_L z^L \in A^2_{u,t}(\Omega)$. 
\end{Definition}
It is straightforward to verify that $R^{s,t}$ is a continuous operator on $A^2_{u,t}(\Omega)$ with a continuous inverse.

\begin{Definition}
    We say that the spaces $A^2_{u,\alpha}(\Omega)$, for $-1 \leq \alpha$, satisfy the Parameter Shift Equivalence(PSE) property if for all 
    $\alpha, \alpha', \gamma, \gamma'  \geq -1$ satisfying $\alpha - \alpha' = \gamma - \gamma'$, we have:
    \begin{equation*}
    \frac{\|\eta^L\|_{A^2_{u,\alpha}}}{\|\eta^L\|_{A^2_{u,\alpha'}}} 
    \asymp \frac{\|\eta^L\|_{A^2_{u,\gamma}}}{\|\eta^L\|_{A^2_{u,\gamma'}}}, 
    \quad \text{as } |L| \to \infty.
    \end{equation*}
\end{Definition}
\begin{Definition}
    Let $\{A^2_{u,\alpha}(\Omega) \}_{ \alpha \geq -1}$ be a family of holomorphic function spaces satisfying the PSE property. 
    For $ \gamma < -1$, we say that $f \in A^2_{u,\gamma}(\Omega)$ if:
    \begin{equation*}
     R^{{-(1+\gamma)}-1,-1}  f \in A^2_{u,-1}(\Omega) = H^2_u(\Omega).
    \end{equation*}
    The norm of a function $f$ in $A^2_{u,\gamma}(\Omega)$ is defined as:
    \begin{equation}
    \| f \|_{A^2_{u,\gamma}} = \left\|  R^{{-(1+\gamma)}-1,-1}  f \right\|_{A^2_{u,-1}}.
    \end{equation}
\end{Definition}

\begin{Definition}
    Let $\mu \in M(\Supp(\mu_u))$. The Cauchy transform of $\mu$, denoted by $C[\mu]$, is defined as follows:
    \begin{equation}
    C[\mu](z) = \int_{\Supp(\mu_u)} k_{-1}(z,\zeta) \, d\mu(\zeta).
    \end{equation}
    Here, $k_\alpha$ denotes the reproducing kernel of $A^2_{u,\alpha}$, which is given by:
    \begin{equation*}
    k_{\alpha}(z,\zeta) = \sum_{L} \frac{ z^L \overline{\zeta}^L }{ \|\eta^L\|^2_{A^2_{u,\alpha}} }.
    \end{equation*}
\end{Definition}

\begin{Definition}
    Let $\alpha \in \mathbb{R}$ and $E \subseteq \Supp(\mu_u)$. The $\alpha$-capacity of $E$ is defined as follows:
    \begin{equation*}
        \operatorname{Cap}_{\alpha}(E) = \inf \{ I_{\alpha}[\mu] : \mu \in \mathcal{P}(E) \}^{-1}.
    \end{equation*}
    Here, $\mathcal{P}(E)$ denotes the set of Borel probability measures supported on $E$, and
    \begin{equation*}
    I_{\alpha}[\mu] = \iint_{\Supp(\mu_u) \times \Supp(\mu_u)} k_{\alpha}(z,\zeta) \, d\mu(z) \, d\mu(\zeta).
    \end{equation*}
\end{Definition}

Now, in a similar fashion to \cite{BS84}, one can prove the following Proposition and Theorem \ref{capacity condition}:
\begin{Proposition} 
    Let $\{A^2_{u,\alpha}(\Omega)\}_{\alpha \in \mathbb{R}}$ be a family of holomorphic function spaces satisfying the PSE property. If $\alpha + \alpha' = -2$, then the dual space satisfies:
    \begin{equation*}
    \left(A^2_{u,\alpha}(\Omega)\right)^* = A^2_{u,\alpha'}(\Omega).
    \end{equation*}
\end{Proposition}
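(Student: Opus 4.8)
The plan is to realise the duality through the Cauchy pairing inherited from $H^2_u=A^2_{u,-1}(\Omega)$. Recall that, via the monomial basis, each space $A^2_{u,\beta}(\Omega)$ is isometrically a weighted $\ell^2$–space: for $f=\sum_L a_L z^L$ one has $\|f\|^2_{A^2_{u,\beta}}=\sum_L |a_L|^2\,\|\eta^L\|^2_{A^2_{u,\beta}}$, the normalised monomials $z^L/\|\eta^L\|_{A^2_{u,\beta}}$ form an orthonormal basis, and in particular polynomials are dense in every $A^2_{u,\beta}(\Omega)$. For $f=\sum_L a_L z^L\in A^2_{u,\alpha}(\Omega)$ and $g=\sum_L b_L z^L\in A^2_{u,\alpha'}(\Omega)$ I set
\[
\langle f,g\rangle:=\sum_L a_L\overline{b_L}\,\|\eta^L\|^2_{A^2_{u,-1}},
\]
and the goal is to show that $g\mapsto\langle\,\cdot\,,g\rangle$ is a topological isomorphism of $A^2_{u,\alpha'}(\Omega)$ onto $\bigl(A^2_{u,\alpha}(\Omega)\bigr)^*$.

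The only real ingredient is the two–sided weight comparison
\[
\|\eta^L\|_{A^2_{u,\alpha}}\,\|\eta^L\|_{A^2_{u,\alpha'}}\;\asymp\;\|\eta^L\|^2_{A^2_{u,-1}},\qquad |L|\to\infty,
\]
valid exactly when $\alpha+\alpha'=-2$. If both $\alpha,\alpha'\geq-1$ the constraint forces $\alpha=\alpha'=-1$ and the statement is trivial; otherwise one of them, say $\alpha$, is $<-1$ and then necessarily $\alpha'=-2-\alpha>-1$ (the two indices cannot both be $<-1$). In that case the identity $-(1+\alpha)-1=\alpha'$ together with the explicit action of $R^{\alpha',-1}$ on monomials turns the defining relation $\|z^L\|_{A^2_{u,\alpha}}=\|R^{\alpha',-1}z^L\|_{A^2_{u,-1}}$ into the exact equality $\|\eta^L\|_{A^2_{u,\alpha}}\|\eta^L\|_{A^2_{u,\alpha'}}=\|\eta^L\|^2_{A^2_{u,-1}}$; the PSE hypothesis is what guarantees that $A^2_{u,\alpha}$ is well defined independently of the reference index $-1$, so that this manipulation is legitimate. (When both indices are $\geq-1$ the same comparison is just an instance of PSE applied to the quadruple $(\alpha,-1,-1,\alpha')$.)

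Given the comparison, both inclusions are routine. For boundedness, Cauchy–Schwarz gives
\[
|\langle f,g\rangle|\le\Bigl(\sum_L|a_L|^2\|\eta^L\|^2_{A^2_{u,\alpha}}\Bigr)^{1/2}\Bigl(\sum_L|b_L|^2\frac{\|\eta^L\|^4_{A^2_{u,-1}}}{\|\eta^L\|^2_{A^2_{u,\alpha}}}\Bigr)^{1/2}\lesssim\|f\|_{A^2_{u,\alpha}}\,\|g\|_{A^2_{u,\alpha'}},
\]
where the comparison is used to bound the second factor by $\|g\|^2_{A^2_{u,\alpha'}}$; hence every $g\in A^2_{u,\alpha'}(\Omega)$ defines a bounded functional. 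Conversely, let $\Lambda\in\bigl(A^2_{u,\alpha}(\Omega)\bigr)^*$. By Riesz representation in the Hilbert space $A^2_{u,\alpha}(\Omega)$ there is $h=\sum_L h_L z^L\in A^2_{u,\alpha}(\Omega)$ with $\Lambda(f)=\langle f,h\rangle_{A^2_{u,\alpha}}$ and $\|h\|_{A^2_{u,\alpha}}=\|\Lambda\|$. Put $g:=\sum_L\bigl(h_L\,\|\eta^L\|^2_{A^2_{u,\alpha}}/\|\eta^L\|^2_{A^2_{u,-1}}\bigr)z^L$; a direct check on monomials shows $\Lambda(z^L)=\langle z^L,g\rangle$, while the comparison yields $\|g\|^2_{A^2_{u,\alpha'}}\asymp\|h\|^2_{A^2_{u,\alpha}}=\|\Lambda\|^2<\infty$, so $g\in A^2_{u,\alpha'}(\Omega)$. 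Since polynomials are dense, $\Lambda=\langle\,\cdot\,,g\rangle$ on all of $A^2_{u,\alpha}(\Omega)$, and combining the two estimates gives the asserted identification with equivalent norms (in fact isometric with respect to the $H^2_u$–pairing, by the exact equality above).

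I do not anticipate a genuine obstacle: this is the classical Cauchy–pairing duality argument, as in \cite{BS84}. The only point requiring care is the bookkeeping that upgrades the weight comparison from a mere $\asymp$ to an honest equality — that is, verifying that the $R^{s,t}$–based definition of the spaces with index below $-1$ is internally consistent — together with the (immediate) density of polynomials needed to pass from monomials to the whole space.
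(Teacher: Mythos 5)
Your proof is correct and is exactly the argument the paper has in mind: the paper gives no proof, only the remark that the statement follows ``in a similar fashion to'' Brown--Shields, and your Cauchy-pairing duality with the weight identity $\|\eta^L\|_{A^2_{u,\alpha}}\|\eta^L\|_{A^2_{u,\alpha'}}=\|\eta^L\|^2_{A^2_{u,-1}}$ (read off from the definition of the spaces with index below $-1$ via $R^{s,t}$) is precisely that argument carried out in detail. The Cauchy--Schwarz bound and the Riesz-representation converse are both sound, so nothing is missing.
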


\begin{Theorem} \label{capacity condition}
    Let $f \in A^2_{u,\alpha}(\Omega)$. If $\operatorname{Cap}_{\alpha}(\mathcal{Z}(f) \cap \Supp(\mu_u)) > 0$,
    then $f$ is not cyclic.
\end{Theorem}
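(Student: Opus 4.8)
The plan is to mimic the classical Brown–Shields argument: a function that vanishes on a set of positive capacity cannot be cyclic because its closed invariant subspace is annihilated by a nonzero functional supported on that set. Concretely, set $Z = \mathcal{Z}(f) \cap \Supp(\mu_u)$ and assume $\operatorname{Cap}_{\alpha}(Z) > 0$. By definition of $\alpha$-capacity, there exists a probability measure $\mu \in \mathcal{P}(Z)$ with finite energy $I_{\alpha}[\mu] < +\infty$. The candidate annihilating functional is the one represented by the Cauchy transform $g = C[\mu]$, or more precisely by the element of the dual space $\left(A^2_{u,\alpha}\right)^* = A^2_{u,\alpha'}$ (with $\alpha + \alpha' = -2$, using the Proposition) whose action on a monomial $z^L$ is $\overline{\widehat{\mu}(L)}$ where $\widehat{\mu}(L) = \int \overline{\zeta}^L \, d\mu(\zeta)$. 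First I would check that this defines a bounded functional on $A^2_{u,\alpha}$: pairing against $h = \sum a_L z^L$ gives $\langle h, g\rangle = \sum a_L \overline{\widehat{\mu}(L)}$, and by Cauchy–Schwarz in the weighted $\ell^2$ space this is controlled by $\|h\|_{A^2_{u,\alpha}}$ times $\left(\sum |\widehat{\mu}(L)|^2 / \|\eta^L\|^2_{A^2_{u,\alpha'}}\right)^{1/2}$; the latter sum is exactly $I_{\alpha}[\mu]$ after expanding $k_{\alpha}(z,\zeta) = \sum z^L \overline{\zeta}^L / \|\eta^L\|^2_{A^2_{u,\alpha}}$ and integrating against $d\mu(z)\,d\mu(\zeta)$, so finiteness of the energy is precisely what makes the functional bounded. (This is where the PSE property enters, since it is what lets us identify the dual and pass $k_{\alpha}$ between the two norms consistently.)

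Next I would show this functional annihilates $[f]$. It suffices to show it kills $z^N f$ for every monomial $z^N$, and by continuity and linearity it is enough to evaluate the pairing. Writing $f(z) = \sum_L a_L z^L$, the pairing $\langle z^N f, g\rangle$ unwinds to $\sum_L a_L \overline{\widehat{\mu}(L+N)} = \int \overline{(\zeta^N f(\zeta))}\, d\mu(\zeta)$, at least formally; one justifies interchanging the sum and the integral using the finite-energy/Cauchy–Schwarz bound above together with $z^N f \in A^2_{u,\alpha}$. Since $\mu$ is supported on $Z \subseteq \mathcal{Z}(f)$, the integrand $\overline{\zeta^N f(\zeta)}$ vanishes $\mu$-almost everywhere, so the pairing is zero. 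Hence $[f] \subseteq \operatorname{Ker} g$.

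Finally I would check that $g$ is not the zero functional, so that $\operatorname{Ker} g \subsetneq A^2_{u,\alpha}$ and $f$ is therefore not cyclic. This follows because $\widehat{\mu}(0) = \mu(\Supp\mu_u) = 1 \neq 0$ (the measure is a probability measure), so $g$ acts nontrivially on the constant function $1 \in A^2_{u,\alpha}$, and indeed $\langle 1, g\rangle = 1$.

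The main obstacle is the rigorous justification of the interchange of summation and integration in the pairing computations — that is, making sense of $\langle h, g\rangle = \int \overline{h}\, d\mu$ for general $h \in A^2_{u,\alpha}$ rather than for polynomials. The clean way around this is to never leave the sequence-space picture: define $g$ directly as the dual element with coefficients $\overline{\widehat{\mu}(L)}$, prove boundedness via the energy integral as above, verify the annihilation identity on the dense set of polynomial multiples $\{z^N f\}$ where everything is a finite sum, and then invoke continuity of $g$. A secondary technical point is confirming that the energy integral $I_{\alpha}[\mu]$ really equals $\sum_L |\widehat{\mu}(L)|^2/\|\eta^L\|^2_{A^2_{u,\alpha'}}$; this is a Fubini/Tonelli argument on the nonnegative expansion of $k_{\alpha}$, valid because all terms are nonnegative, and it is also the identity that forces the correct choice of exponent $\alpha'$ with $\alpha + \alpha' = -2$ in the reproducing kernel appearing in the capacity versus the one appearing in the dual norm.
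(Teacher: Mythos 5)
The paper does not actually write out a proof of this theorem --- it only says the argument is ``in a similar fashion to'' Brown--Shields --- and your proposal reconstructs exactly that argument: take a finite-energy probability measure $\mu$ on $Z=\mathcal{Z}(f)\cap\Supp(\mu_u)$, use its moments $\widehat{\mu}(L)$ to build a bounded functional (boundedness being equivalent, via Cauchy--Schwarz and the PSE-based duality, to $I_{\alpha}[\mu]<\infty$), show it annihilates $[f]$, and note it does not annihilate $1$. The skeleton is right, the boundedness and non-degeneracy steps are correct (modulo a harmless slip: the Cauchy--Schwarz bound produces $\sum_L|\widehat{\mu}(L)|^2/\|\eta^L\|^2_{A^2_{u,\alpha}}$ directly, which is $I_\alpha[\mu]$; the version with $\alpha'$ agrees only up to the PSE constants), and for the case actually used in the paper --- $f$ a polynomial --- your argument is complete, since then each $z^Nf$ genuinely has finitely many Taylor coefficients and the pairing is a finite sum equal to $\int\overline{\zeta^N f(\zeta)}\,d\mu(\zeta)=0$.

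The genuine gap is in the annihilation step for the theorem as stated, i.e.\ for a general $f\in A^2_{u,\alpha}(\Omega)$. Your claim that one can ``verify the annihilation identity on the dense set of polynomial multiples $\{z^Nf\}$ where everything is a finite sum'' is false unless $f$ itself is a polynomial: $z^Nf$ has infinitely many Taylor coefficients, so $\langle z^Nf,g\rangle=\sum_L a_L\overline{\widehat{\mu}(L+N)}$ is an infinite series, and identifying it with $\int\overline{\zeta^Nf(\zeta)}\,d\mu(\zeta)$ requires (i) a notion of boundary values of $f$ on $\Supp(\mu_u)$ --- which is also needed just to make sense of $\mathcal{Z}(f)\cap\Supp(\mu_u)$ in the hypothesis --- and (ii) the fact that the Taylor series of $f$ converges $\mu$-a.e.\ to those boundary values for every finite-$\alpha$-energy measure $\mu$. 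This is precisely the hard capacitary content of Brown--Shields (quasi-everywhere convergence / the capacitary strong-type estimate), and it cannot be dismissed as a Fubini interchange: the series defining $f$ need not converge pointwise on the boundary at all. To make the proof honest for general $f$ you must either import (or prove in this setting) that elements of $A^2_{u,\alpha}$ admit limits quasi-everywhere with respect to $\operatorname{Cap}_\alpha$ and that $\langle h,g\rangle=\int \overline{h^*}\,d\mu$ for such limits, or explicitly restrict the statement to $f$ for which $z^Nf$ has an absolutely convergent expansion against $\widehat{\mu}$ (e.g.\ polynomials, which suffices for every application in the paper).
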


\begin{Proposition} \label{Poly_PropX}
    Let $\Omega \subseteq \mathbb{D}^n$ be the polyhedral domain defined by:
    \begin{equation*}
        p_j(z) = \lambda_j |z_1|^{\beta_1^j} \cdots |z_n|^{\beta_n^j} < 1, \quad j = 1, \dots, m,
    \end{equation*}
    where $\lambda_j \geq 1$, $\beta_i^j \in \mathbb{Q}$, and $\sum_{i} \beta_i^j = 1$ for all $j$. If $m \geq n$, then the family of spaces $A^2_{u,\alpha}(\Omega)$ satisfies the PSE property when
    \begin{equation*}
        u(z) = \log \max \{ p_j(z)  : j = 1,\dots,m \}.
    \end{equation*}
\end{Proposition}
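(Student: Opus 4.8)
The plan is to reduce the Parameter Shift Equivalence to an explicit formula for the norm of a monomial $z^L$ in $A^2_{u,\alpha}(\Omega)$, in the same spirit as Example~\ref{Omega_P example} and formula~\eqref{Ellipsoid_NORM}. The exhaustion function $u(z)=\log\max_j p_j(z)$ is toric, and since each $\log p_j(z)=\log\lambda_j+\sum_i\beta_i^j\log|z_i|$ is pluriharmonic, $u$ is a maximum of pluriharmonic functions, hence maximal plurisubharmonic on $\Omega$; therefore $(dd^c u)^n$ vanishes on $\Omega$ and $\mu_{u_r}=(dd^c u_r)^n$ is supported on $S_r(u)$. The crucial elementary point, which is exactly where the hypothesis $\sum_i\beta_i^j=1$ enters, is the scaling identity $u(e^r z)=u(z)+r$. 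Passing to logarithmic coordinates $x_i=\log|z_i|$, the sublevel set $\{u<r\}$ corresponds to the polytope $P_r=\{x:\langle\beta^j,x\rangle\le r-\log\lambda_j,\ j=1,\dots,m\}$, and the identity says precisely that $P_r=P_0+r\mathbf{1}$.

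First I would describe $\mu_{u_r}$ explicitly. Using the standard correspondence between the complex Monge--Ampère operator on a toric domain and the real Monge--Ampère measure of the associated convex function of $x$ (the same device used in the Examples of Section~2 and in \cite{DBook}), one gets a decomposition
\[
\mu_{u_r}=\sum_{S\in\mathcal{V}}c_S\,\nu_{S}^{(r)},
\]
where $\mathcal{V}$ is the finite set of vertices of $P_r$ (independent of $r$ since $P_r=P_0+r\mathbf{1}$), $\nu_S^{(r)}$ is the Haar probability measure on the torus $T_S^{(r)}=\{\,|z_i|=e^{r}e^{x_S(i)}\,\}$ with $x_S$ the corresponding vertex of $P_0$, and $c_S>0$ is the Euclidean volume (up to a universal normalization) of $\operatorname{conv}\{0,\beta^j:j\in S\}$. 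The point is that near such a vertex $u_r$ equals the maximum of the $n$ facet-affine functions together with the constant $r$, so the relevant gradient polytope $\operatorname{conv}\{0,\beta^j:j\in S\}$ --- and hence $c_S$ --- does not depend on $r$; the hypothesis $m\ge n$ is what guarantees $P_r$ actually has vertices, so that this measure is nonzero. Consequently, for every multi-index $L$,
\[
\mu_{u_r}(|z^L|^2)=\sum_{S\in\mathcal{V}}c_S\prod_{i=1}^n e^{2l_i(x_S(i)+r)}=e^{2r|L|}\,M_L,\qquad M_L:=\sum_{S\in\mathcal{V}}c_S\prod_{i=1}^n e^{2l_i x_S(i)}>0,
\]
with $M_L$ finite and independent of $r$ (and equal to $\mu_u(|z^L|^2)$ by weak-$*$ convergence, since $|z^L|^2$ is bounded continuous on $\overline\Omega\subseteq\overline{\mathbb D}^n$).

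Then I would carry out the $r$-integration, which is a plain Gamma integral. With the modified weighted Bergman norm, substituting $r=-s$ gives
\[
\|z^L\|^2_{A^2_{u,\alpha}}=\frac{(2n)^{\alpha+1}}{\Gamma(\alpha+1)}\,M_L\int_0^{\infty} s^\alpha e^{-2(n+|L|)s}\,ds=M_L\left(\frac{n}{n+|L|}\right)^{\alpha+1},
\]
valid for all $\alpha\ge-1$ (at $\alpha=-1$ the right-hand side is $M_L=\mu_u(|z^L|^2)$, matching the Hardy-space norm and the $r\to0^-$ limit). Dividing, the factor $M_L$ cancels and
\[
\frac{\|\eta^L\|_{A^2_{u,\alpha}}}{\|\eta^L\|_{A^2_{u,\alpha'}}}=\left(\frac{n}{n+|L|}\right)^{(\alpha-\alpha')/2},
\]
which depends only on $\alpha-\alpha'$ and on $|L|$. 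Hence whenever $\alpha-\alpha'=\gamma-\gamma'$ the two sides of the PSE relation are in fact literally equal, and a fortiori $\asymp$ as $|L|\to\infty$. (With the unnormalized definition the same computation produces an extra constant $\sqrt{\Gamma(\alpha+1)/\Gamma(\alpha'+1)}$, which is harmless for $\asymp$.)

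The main obstacle is the second step: rigorously establishing the decomposition $\mu_{u_r}=\sum_S c_S\,\nu_S^{(r)}$ with coefficients independent of $r$. This requires care on three fronts: (i) justifying the passage from the complex $(dd^c u_r)^n$ to the real Monge--Ampère mass at the vertices, via invariance of $(dd^c u_r)^n$ under the torus action together with the log-coordinate pushforward (exactly the mechanism underlying the change of variables in the Examples of Section~2 and in \cite{DBook}); (ii) the stability of the combinatorial type of $P_r$ in $r$, which is immediate from $P_r=P_0+r\mathbf{1}$; and (iii) the case in which the $\beta^j$ are not in general position, so that a vertex is cut out by more than $n$ facets --- there $\operatorname{conv}\{0,\beta^j:j\in S\}$ is no longer a simplex, but its volume is still a positive constant independent of $r$, so the conclusion is unchanged. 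Once the $r$-independence of $M_L$ is secured, the remainder is the routine Gamma-integral bookkeeping already illustrated in Section~2.
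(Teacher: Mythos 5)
Your proposal is correct and follows essentially the same route as the paper: both identify $\mu_{u_r}$ (via the toric/log-coordinate description of the Monge--Ampère measure, as in Example~6.11 of \cite{DBook}) as a convex combination of Haar measures on tori whose radii scale like $e^{r}$ with $r$-independent coefficients --- your scaling identity $P_r=P_0+r\mathbf{1}$ is exactly what the paper extracts from Cramer's rule using $\sum_i\beta_i^j=1$ --- and then both reduce the monomial norm to $C_L$ times a function of $|L|$ and $\alpha$ alone, from which PSE is immediate. Your explicit treatment of degenerate vertices cut out by more than $n$ facets is a small additional care point the paper glosses over, but the argument is the same.
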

\begin{proof}
    Following Example~6.11 in \cite{DBook}, the measure $\mu_{u_r}$ is supported only on the common level set of exactly $n$ functions $p_j$ in $S_r$. On each such intersection, $\mu_{u_r}$ equals a constant multiple of the uniform probability measure. In this situation, the vectors $\{\beta^{\sigma(j)}\}_{j=1}^{n}\subset\mathbb{Q}^n$ must be linearly independent; otherwise, for
    \begin{equation*}
        w(z)=\log\!\bigl(\max_{1\le j\le n} p_{\sigma(j)}(z)\bigr)
    \end{equation*}
    we would have $(dd^{c}w)^n \equiv 0$, hence,
    \begin{equation*}
        \mu_{u_r} = \sum_{j=1}^{N} C_j \mu_j^r.
    \end{equation*}
    Here, $N$ is the number of intersection sets, and the coefficients $C_j$ satisfy $\sum_{j} C_j = 1$, each $C_j$ depends only on the domain $\Omega$ and the exhaustion function $u$. To determine the support of each $\mu_j^r$, we solve the following linear system:
    \begin{equation*}
    \begin{bmatrix}
        \beta_1^{\sigma_j(1)} & \cdots & \beta_n^{\sigma_j(1)} \\
        \vdots & \ddots & \vdots \\
        \beta_1^{\sigma_j(n)} & \cdots & \beta_n^{\sigma_j(n)}
    \end{bmatrix}
    \begin{bmatrix}
        \log |z_1| \\
        \vdots \\
        \log |z_n|
    \end{bmatrix}
    =
    \begin{bmatrix}
        r \\
        \vdots \\
        r
    \end{bmatrix}
    -
    \begin{bmatrix}
        \log \lambda_{\sigma_j(1)} \\
        \vdots \\
        \log \lambda_{\sigma_j(n)}
    \end{bmatrix}.
    \end{equation*}
    A direct application of Cramer's rule shows that:
    \begin{equation*}
        |z_i| = C'_i e^r,
    \end{equation*}
    where each $C'_i$ is a constant independent of $r$ and depends only on $\lambda_i$. Consequently, the norm of a monomial $z^L$ in $A^2_{u,\alpha}(\Omega)$ is given by:
    \begin{equation*}
        \| z^L \|^2_{A^2_{u,\alpha}(\Omega)} = C_L (|L|+n)^{-(\alpha + 1)}.
    \end{equation*}
    Since $C_L$ is independent of $\alpha$, this establishes the PSE property for $A^2_{u,\alpha}(\Omega)$.
\end{proof}

\begin{Theorem} \label{ComReinDef}
    Let $\Omega \subseteq \mathbb{D}^n$ be a hyperconvex Complete Reinhardt domain. Then, there exists an exhaustion function $u$ for $\Omega$ such that $A^2_{u,\alpha}(\Omega)$ satisfies the PSE property.
\end{Theorem}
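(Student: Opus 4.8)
The plan is to choose for $u$ the logarithm of the Minkowski gauge of $\Omega$ and to exploit its homogeneity: this makes the family of Poletsky--Stessin measures $\{\mu_{u_r}\}_{r<0}$ self-similar under dilations, which forces the monomial norms to factor in a way that makes the PSE property immediate — in fact an equality rather than merely a $\asymp$. Concretely, let $\mathfrak{p}_\Omega(z)=\inf\{t>0:z/t\in\Omega\}$ be the Minkowski gauge of $\Omega$. Since $\Omega$ is bounded, pseudoconvex (hyperconvexity implies pseudoconvexity), and complete Reinhardt — hence balanced — the gauge $\mathfrak{p}_\Omega$ is continuous and plurisubharmonic on $\mathbb{C}^n$, positive off the origin, with $\Omega=\{\mathfrak{p}_\Omega<1\}$, $\partial\Omega=\{\mathfrak{p}_\Omega=1\}$, and $\mathfrak{p}_\Omega(tz)=t\,\mathfrak{p}_\Omega(z)$ for $t>0$. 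Then $u:=\log\mathfrak{p}_\Omega$ is a plurisubharmonic exhaustion function of $\Omega$ with values in $[-\infty,0)$ and $\{u=-\infty\}=\{0\}$, and it obeys the key identity $u(tz)=u(z)+\log t$; in particular $B_r(u)=e^r\Omega$ and $S_r(u)=e^r\partial\Omega$ for $r<0$. (For the complex ellipsoid $\Omega_P$ this $u$ differs from the exhaustion used earlier in the paper, which is harmless since the statement only asserts the existence of some exhaustion function.)

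Next I would transport the Monge--Ampère data through dilations. Write $T_t(z)=tz$, a $\mathbb{C}$-linear automorphism of $\mathbb{C}^n$. Since $dd^c$ kills constants, the identity $u\circ T_t=u+\log t$ gives $(dd^cu)^n=(dd^c(u\circ T_t))^n=T_t^*\bigl((dd^cu)^n\bigr)$, so $(dd^cu)^n$ is invariant under every dilation; and $u_r\circ T_t=\max(u\circ T_t,r)=\log t+u_{r-\log t}$ gives $(dd^cu_r)^n=(T_{e^{r-s}})_*\bigl((dd^cu_s)^n\bigr)$ for $r,s<0$. Directly from the Poletsky--Stessin definition one has, for $r<0$, $\mu_{u_r}=\chi_{S_r(u)}(dd^cu_r)^n-\chi_{S_r(u)}(dd^cu)^n$ (the contributions on $\{u>r\}$ cancel and $(dd^cu_r)^n$ vanishes on $\{u<r\}$). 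Combining this with $S_r(u)=T_{e^{r-s}}\bigl(S_s(u)\bigr)$ yields the self-similarity
\[
\mu_{u_r}=(T_{e^{r-s}})_*\,\mu_{u_s},\qquad r,s<0.
\]
I expect this to be the main obstacle: $T_t$ does not preserve $\Omega$ for $t<1$, so one must localize near the level sets $S_r(u)\subset\Omega\setminus\{0\}$, where $u$ and $u_r$ are locally bounded and the Bedford--Taylor products together with their push-forwards under $T_t$ are unambiguous; one should also carefully record that $\mu_{u_r}$ is genuinely supported on $S_r(u)$.

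Finally I would read off the monomial norms. Testing the self-similarity against $|z^L|^2$ and using $|(T_tz)^L|^2=t^{2|L|}|z^L|^2$ gives $\mu_{u_r}(|z^L|^2)=e^{2(r-s)|L|}\mu_{u_s}(|z^L|^2)$, so $r\mapsto e^{-2r|L|}\mu_{u_r}(|z^L|^2)$ is a constant $m_L$, independent of $r$ and of $\alpha$; letting $r\to0^-$ identifies $m_L=\mu_u(|z^L|^2)=\|z^L\|^2_{H^2_u}$, which is positive because $z^L$ is a nonzero holomorphic function and $\Supp(\mu_u)$ is a uniqueness set for $\Omega$. Substituting $\mu_{u_r}(|z^L|^2)=m_Le^{2r|L|}$ into the modified norm and using $\int_{-\infty}^0|r|^\alpha e^{\beta r}\,dr=\Gamma(\alpha+1)\beta^{-(\alpha+1)}$ with $\beta=2(n+|L|)$ gives
\[
\|z^L\|^2_{A^2_{u,\alpha}(\Omega)}=\frac{(2n)^{\alpha+1}}{\Gamma(\alpha+1)}\,m_L\int_{-\infty}^0|r|^\alpha e^{2(n+|L|)r}\,dr=m_L\,n^{\alpha+1}(n+|L|)^{-(\alpha+1)}
\]
for $\alpha>-1$, and $\|z^L\|^2=m_L$ for $\alpha=-1$, consistent with $A^2_{u,-1}=H^2_u$ (the unmodified normalization gives the analogous $m_L\,\Gamma(\alpha+1)(1+2|L|)^{-(\alpha+1)}$ and serves just as well). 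Hence
\[
\frac{\|\eta^L\|_{A^2_{u,\alpha}}}{\|\eta^L\|_{A^2_{u,\alpha'}}}=\Bigl(\frac{n}{n+|L|}\Bigr)^{(\alpha-\alpha')/2},
\]
which depends on $\alpha,\alpha'$ only through $\alpha-\alpha'$. Therefore, whenever $\alpha-\alpha'=\gamma-\gamma'$, the two ratios in the definition of the PSE property coincide exactly, and in particular $A^2_{u,\alpha}(\Omega)$ satisfies the PSE property.
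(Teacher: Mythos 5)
Your argument is correct in substance, but it reaches the conclusion by a genuinely different route than the paper. The paper approximates the logarithmic image $\Omega'$ from inside by polyhedral domains cut out by weighted geometric means $\lambda_j|z_1|^{\beta_1^j}\cdots|z_n|^{\beta_n^j}$ with $\sum_i\beta_i^j=1$, computes the monomial norms there via Proposition~\ref{Poly_PropX}, and then passes to the limit using convergence of Monge--Amp\`ere measures along increasing sequences (Klimek, Theorem~3.6.1). You instead work directly with the limit object: since $\Omega$ is balanced and pseudoconvex, $u=\log\mathfrak{p}_\Omega$ is the pluricomplex Green function with pole at the origin, and its exact homogeneity $u(tz)=u(z)+\log t$ gives the self-similarity $\mu_{u_r}=(T_{e^{r-s}})_*\mu_{u_s}$, hence $\mu_{u_r}(|z^L|^2)=m_Le^{2r|L|}$ and the closed formula $\|z^L\|^2_{A^2_{u,\alpha}}=m_L\,n^{\alpha+1}(n+|L|)^{-(\alpha+1)}$. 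This is the same structural fact the paper extracts ($C_L$ independent of $\alpha$), but you obtain it as an identity rather than an asymptotic equivalence, and you avoid the limiting argument entirely — in fact the paper's $u$ is precisely the upper envelope of the log-homogeneous polyhedral minorants, i.e.\ your $\log\mathfrak{p}_\Omega$, so you have identified the paper's exhaustion in closed form. Two points deserve tightening. First, you assert continuity of $\mathfrak{p}_\Omega$; this is where hyperconvexity (rather than mere pseudoconvexity) enters, via Demailly's theorem that the pluricomplex Green function of a bounded hyperconvex domain is continuous, and continuity is also what guarantees $e^r\overline{\Omega}\subset\Omega$, i.e.\ that $B_r(u)$ is relatively compact. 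Second, your justification that $m_L>0$ via ``$\Supp(\mu_u)$ is a uniqueness set'' is the weakest step, since a weak-$*$ limit of the $\mu_{u_r}$ could in principle charge the pluripolar set $\{z^L=0\}$; it is cleaner to note that $m_L=e^{-2r|L|}\mu_{u_r}(|z^L|^2)$ for every fixed $r<0$, that $u_r$ is locally bounded so $(dd^cu_r)^n$ does not charge the pluripolar coordinate hyperplanes, and that $\mu_{u_r}$ has positive total mass, whence $m_L>0$.
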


\begin{proof}
    Consider the mapping $L: \Omega \to \Omega' \subset \mathbb{R}^n$ defined by
    \begin{equation*}
    L(z_1,\ldots,z_n) = (\log |z_1|, \ldots, \log |z_n|).
    \end{equation*}
    For $x = (x^{(1)},\ldots,x^{(n)}) \in \Omega'$, let
    \begin{equation*}
    F_0(x) = \max \{x^{(1)}, \ldots, x^{(n)} \}.
    \end{equation*}
    Given a dense sequence $\{x_i\}_{i \in \mathbb{N}}$ in $\partial \Omega'$, define the iterative sequence:
    \begin{equation*}
        F_i = \max \{ F_{i-1}, f_i \},
    \end{equation*}
    where $f_i(x) = x \cdot v_i + \lambda_i$ with $(1,\ldots,1) \cdot v_i = 1$, and $v_i \in [0,1]^n$ is such that $f_i(x_i) = 0$ and $f_i(x) < 0$ for all $x \in \Omega'$. Then, the sequence $g_i = F_i \circ L$ consists of increasing plurisubharmonic functions on $\Omega$ converging to an exhaustion function $u$ for $\Omega$. So, by Theorem 3.6.1 in \cite{KlimekBook}, we have
    \begin{equation*}
        \| z^L \|_{A^2_{g_i,\alpha}} \to \| z^L \|_{A^2_{u,\alpha}}
    \end{equation*}
    for any multi-index $L$ and $\alpha \in [-1, 0]$. Applying Proposition~\ref{Poly_PropX} completes the proof.
\end{proof}

\begin{Notation}
    Theorem~\ref{ComReinDef} allows us to define the Poletsky-Stessin Weighted Bergman Spaces on a wide range of domains and parameter values of $\alpha$. From now on, when there is no ambiguity in the choice of the exhaustion function, we denote the space $A^2_{u,\alpha}(\Omega)$ by $\mathcal{D}_{\beta}(\Omega)$, where $\beta = -(\alpha+1)$. 
\end{Notation}    
    
\section{Some Results on the Complex Ellipsoid in $\mathbb{C}^2$}

Here in this section, we study cyclicity properties of some polynomials in $\mathcal{D}_{\beta}(\Omega_p)$, where $\Omega_p = \{(z_1,z_2) \in \mathbb{C}^2 : |z_1|^{2p} + |z_2|^{2p} <1\}$ and $u(z) = \frac{1}{2p} \log (|z_1|^{2 p } + |z_2|^{2 p })$.
\begin{Proposition} \label{f_cyc_ellipsoid}
    Given $p \geq 1$, the polynomial $f(z) = 1 - r_1^{2p-1} z_1 - r_2^{2p-1} z_2$, where $r_1,r_2 \geq 0$ and $r_1^{2p}+r_2^{2p} = 1$ is cyclic in $\mathcal{D}_{\beta}(\Omega_p)$ whenever $\beta \leq 2$.
\end{Proposition}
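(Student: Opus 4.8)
The plan is to apply the radial dilation criterion of Theorem~\ref{Radial_Dilation}. First I would note that $f$ does not vanish on $\Omega_p$: since $r_1^{2p}+r_2^{2p}=1$ with $r_1,r_2\ge 0$, the point $(r_1^{2p-1},r_2^{2p-1})$ lies on $\partial\Omega_p$ (it is the point where the supporting real hyperplane $r_1^{2p-1}z_1+r_2^{2p-1}z_2=1$ touches the closed ellipsoid, by the equality case of H\"older's inequality applied to $|z_1|^{2p}+|z_2|^{2p}\le 1$ versus $r_1^{2p-1}\mathrm{Re}\,z_1 + r_2^{2p-1}\mathrm{Re}\,z_2$), and this hyperplane meets $\overline{\Omega_p}$ only at that boundary point. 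Hence $f$ is zero-free on $\Omega_p$ and $f_r$ is zero-free on $\overline{\Omega_p}$ for $0<r<1$, so $f/f_r$ is holomorphic on a neighborhood of $\overline{\Omega_p}$ and in particular lies in $\mathcal D_\beta(\Omega_p)$; the only issue is the uniform norm bound as $r\to 1^-$.

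Next I would set up the norm estimate. Writing $g_r = f/f_r$, expand $g_r(z)=\sum_L a_L(r)z^L$ and use the monomial norm formula~\eqref{Ellipsoid_NORM}, which with $\beta=-(\alpha+1)$ gives $\|z^L\|^2 \asymp c_L\,(|L|_p + 2)^{\beta-1}$ where $|L|_p = p(\ell_1/p+\ell_2/p)=\ell_1+\ell_2 = |L|$ and $c_L = \frac{\Gamma(\ell_1/p+1)\Gamma(\ell_2/p+1)}{\Gamma(\ell_1/p+\ell_2/p+2)}$ (the $n=2$ specialization, with $n$-dependent constants absorbed). The condition $\beta\le 2$ makes the exponent $\beta-1\le 1$, so $\|z^L\|^2 \lesssim c_L (|L|+2)$. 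Since $\sum_{|L|=k} c_L \asymp k^{-1}\cdot(\text{bounded})$ — more precisely $c_L$ is comparable to the Bergman-type monomial norm on $\Omega_p$ and summing over the simplex $|L|=k$ contributes a factor growing like a fixed power — one reduces the whole question to controlling $\sum_k (\sup_{|L|=k}|a_L(r)|^2)\cdot(\text{polynomial in }k)$, which up to the polynomial weight is the behavior of a weighted $\ell^2$ norm of the Taylor coefficients of $g_r$. The key analytic input is then a uniform (in $r$) bound on the Taylor coefficients of $f/f_r$; for the analogous one-variable polynomials $1-z$ this is classical (the coefficients of $(1-rz)/(1-z)$ stay bounded), and here I would reduce to a one-variable situation by restricting to the complex line through the origin and the touching point $(r_1^{2p-1}, r_2^{2p-1})$, writing $f(z) = 1 - \zeta$ in the coordinate $\zeta = r_1^{2p-1}z_1 + r_2^{2p-1}z_2$ and controlling how the monomial expansion in $z$ interacts with powers of $\zeta$.

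The main obstacle I anticipate is precisely this last reduction: $f$ is a function of the single linear form $\zeta$, but the norm in $\mathcal D_\beta(\Omega_p)$ is not rotation-invariant in a way adapted to $\zeta$ (the weights $c_L$ depend on $\ell_1,\ell_2$ separately through the $1/p$ factors), so one cannot simply transport a one-variable estimate. I would handle this by expanding $\zeta^k = (r_1^{2p-1}z_1+r_2^{2p-1}z_2)^k$ by the binomial theorem and estimating $\|\zeta^k\|^2_{\mathcal D_\beta(\Omega_p)}$ directly: using $r_1^{2p}+r_2^{2p}=1$ one expects $\|\zeta^k\|^2 \asymp (k+2)^{\beta-1}$ up to constants, i.e.\ $\zeta$ behaves like a single variable as far as growth of norms of its powers is concerned (this is where $r_1^{2p}+r_2^{2p}=1$ is used, ensuring no exponential gain or loss). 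Granting that, $f/f_r$ as a power series in $\zeta$ has uniformly bounded coefficients, and the series $\sum_k b_k(r)\zeta^k$ with $\sup_r|b_k(r)|\lesssim 1$ satisfies $\|f/f_r\|^2_{\mathcal D_\beta} \lesssim \sum_k (k+2)^{\beta-1} < \infty$ exactly when $\beta<0$; for $0\le\beta\le 2$ one instead uses that $(1-rz)/(1-z) = 1 + (1-r)\sum_{k\ge 1} r^{k-1}\cdot\frac{1-z^k}{1-z}\cdot$(telescoping), giving $b_k(r) = 1$ for the constant term and $b_k(r)=1-r$ thereafter, whence $\|f/f_r\|^2_{\mathcal D_\beta}\lesssim 1 + (1-r)^2\sum_{k}(\text{something summable in }\beta\le 2)$, and the factor $(1-r)^2$ kills the divergence. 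Making the exponent bookkeeping precise — confirming that the borderline $\beta=2$ still yields a finite bound after the $(1-r)^2$ gain — is the delicate computational point, and I expect it to mirror the one-variable Dirichlet-type cyclicity arguments of \cite{Beneteau2016_bidisk} and \cite{Kosinski_Vavitsas2023}.
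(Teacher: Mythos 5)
Your proposal follows essentially the same route as the paper: apply Theorem~\ref{Radial_Dilation}, expand $f/f_r$ as a geometric series in the linear form $w=r_1^{2p-1}z_1+r_2^{2p-1}z_2$, use orthogonality of the homogeneous pieces, and reduce everything to the estimate $\|w^j\|^2\lesssim (j+2)^{\beta-1}$, with the normalization $r_1^{2p}+r_2^{2p}=1$ turning the binomial sum into $1$. Two caveats: the central estimate $\|w^j\|^2\lesssim (j+2)^{\beta-1}$ is where the real work lies and you only assert it — the paper proves it by Stirling's approximation applied to $\binom{j}{j_1}\Gamma(\tfrac{j_1}{p}+1)\Gamma(\tfrac{j_2}{p}+1)/\Gamma(\tfrac{j}{p}+2)$ together with the observation that the resulting ratio $F(j_1/j)$ is maximized exactly at $j_1/j=r_1^{2p}$, where it equals $1$; and your final bookkeeping is off, since $(1-r)^2$ alone does not ``kill the divergence'' — you need the retained factor $r^{2(j-1)}$ to get $(1-r)^2\sum_j r^{2(j-1)}(j+2)^{\beta-1}\asymp(1-r)^{2-\beta}$, which is bounded precisely for $\beta\le 2$ (also note the ratio is $(1-w)/(1-rw)$, not $(1-rw)/(1-w)$, and the coefficients are $(1-r)r^{j-1}$, not $1-r$).
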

\begin{proof}
    \begin{align*}
        &\sup_{0<r<1} \left\| \frac{f}{f_r} \right\|^2_{A^2_{u,\alpha}(\Omega_p)} \\[10pt] 
        &=\sup_{0<r<1} \left\| (1 - r_1^{2p-1} z_1 - r_2^{2p-1} z_2) (1 + r(r_1^{2p-1} z_1 + r_2^{2p-1} z_2) + r^2(r_1^{2p-1} z_1 + r_2^{2p-1} z_2)^2 + \ldots)  \right\|^2_{A^2_{u,\alpha}(\Omega_p)}  \\
        &= 1 + \sup_{0<r<1}(1-r)^2 \sum_{j \geq 1} r^{2(j-1)} 
        \sum_{j_1 + j_2 = j} \binom{j}{j_1}^2 
        \frac{\Gamma\left(\frac{j_1}{p}+1\right) \Gamma\left(\frac{j_2}{p}+1\right)}
        {\Gamma\left(\frac{j}{p}+2\right)} 
        (j+2)^{\beta} r_1^{2(2p-1)j_1} r_2^{2(2p-1)j_2} \\[10pt]
        &\asymp 1 + \sup_{0<r<1}(1-r)^2 \sum_{j \geq 1} r^{2(j-1)} (j+2)^{\beta - 1}
        \sum_{j_1 + j_2 = j} \binom{j}{j_1} r_1^{2p j_1} r_2^{2p j_2}
        \frac{j^{j\left(1-\frac{1}{p}\right)}}
        {j_1^{j_1\left(1-\frac{1}{p}\right)} j_2^{j_2\left(1-\frac{1}{p}\right)}} 
        r_1^{2(p-1)j_1} r_2^{2(p-1)j_2} \\[10pt]
        &\lesssim 1 + \sup_{0<r<1}(1-r)^2 \sum_{j \geq 1} r^{2(j-1)} (j+2)^{\beta - 1}
        \sum_{j_1 + j_2 = j} \binom{j}{j_1}  
         r_1^{2p j_1} r_2^{2p j_2} \\[10pt]
        &= 1 + \sup_{0<r<1}(1-r)^2 \sum_{j \geq 1} r^{2(j-1)} 
        (j+2)^{\beta - 1} \\[10pt]
        &\asymp 1 + \sup_{0<r<1}(1-r)^{2-\beta} (1+r)^{-\beta} < +\infty.
    \end{align*}
    Here, the second estimate comes from applying Stirling's approximation on $\binom{j}{j_1}
        \frac{\Gamma\left(\frac{j_1}{p}+1\right) \Gamma\left(\frac{j_2}{p}+1\right)}
        {\Gamma\left(\frac{j}{p}+2\right)}$, and the third one comes from the fact that the function $F(x) = \frac{r_1^{2(p-1)x} r_2^{2(p-1)(1-x)}}
        {x^{x\left(1-\frac{1}{p}\right)} (1-x)^{(1-x)\left(1-\frac{1}{p}\right)}} 
        $ attains its maximum at $x = r_1^{2p}$. And now Theorem \ref{Radial_Dilation} finishes the proof.
\end{proof}

The following technical lemma from \cite{LaplaceMethodBook} will help us prove Proposition~\ref{PEF boundedness for Ellipsoid}:
\begin{Lemma} [Laplace's Method] \label{Laplace Method}
        Let  $I(\lambda) = \int_a^b e^{-\lambda h(x)} g(x) \,dx$.
        Assume that  $h \in \mathcal{C}^2[a,b]$ and $h \geq 0$  has a unique global minimum at  $x_0 \in (a, b) $, that $ h''(x_0) > 0 $, $ g(x_0) \neq 0 $ and $g$ is continuous on $[a,b]$. Then, as $ \lambda \to \infty $,
        \begin{equation*}
            I(\lambda) \sim \sqrt{\frac{2\pi}{\lambda h''(x_0)}} e^{-\lambda h(x_0)} g(x_0)
        \end{equation*}
    \end{Lemma}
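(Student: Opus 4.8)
The plan is to run the standard localization-and-rescaling argument. First I would normalize: replacing $h$ by $\tilde h = h - h(x_0)$ multiplies $I(\lambda)$ only by the constant $e^{-\lambda h(x_0)}$ (and leaves $h''(x_0)$ unchanged), so I may assume $h(x_0)=0$; and replacing $g$ by $-g$ if necessary (for complex $g$ one treats $\operatorname{Re} g$ and $\operatorname{Im} g$ separately, then splits each into positive and negative parts), I may assume $g(x_0)>0$. The target then becomes $\sqrt{\lambda}\,I(\lambda)\to g(x_0)\sqrt{2\pi/h''(x_0)}$.

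Second, I would localize. Fix $\delta>0$ small enough that $[x_0-\delta,x_0+\delta]\subset(a,b)$. Since $x_0$ is the \emph{unique} global minimum of the continuous function $h$, the restriction of $h$ to the compact set $K_\delta=\{x\in[a,b]:|x-x_0|\ge\delta\}$ attains a value $m_\delta>0$ as its minimum. Hence the contribution of $K_\delta$ to $I(\lambda)$ is bounded by $\|g\|_\infty (b-a)\,e^{-\lambda m_\delta}$, which is exponentially small and therefore negligible compared with the conjectured main term of order $\lambda^{-1/2}$.

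Third, on $[x_0-\delta,x_0+\delta]$ I would use the second-order Taylor expansion together with continuity of $h''$: write $h(x)=\tfrac12 h''(x_0)(x-x_0)^2\bigl(1+\varepsilon(x)\bigr)$ with $\varepsilon(x)\to 0$ as $x\to x_0$ (this follows from $h\in\mathcal{C}^2$ and $h(x_0)=h'(x_0)=0$, e.g. via the integral form of the remainder). Given $\eta\in(0,1)$, shrink $\delta$ so that $|\varepsilon(x)|\le\eta$ and $|g(x)-g(x_0)|\le\eta\, g(x_0)$ on $[x_0-\delta,x_0+\delta]$. Then the local integral is trapped between the two expressions $(1\mp\eta)g(x_0)\int_{x_0-\delta}^{x_0+\delta}\exp\!\bigl(-\tfrac{\lambda}{2}h''(x_0)(1\pm\eta)(x-x_0)^2\bigr)\,dx$. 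Substituting $t=\sqrt{\lambda}\,(x-x_0)$ turns each Gaussian integral into $\lambda^{-1/2}\int_{-\delta\sqrt\lambda}^{\delta\sqrt\lambda}\exp\!\bigl(-\tfrac12 h''(x_0)(1\pm\eta)t^2\bigr)\,dt$, which tends as $\lambda\to\infty$ to $\lambda^{-1/2}\sqrt{2\pi/\bigl(h''(x_0)(1\pm\eta)\bigr)}$.

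Finally, combining the three parts, both $\limsup_{\lambda\to\infty}\sqrt\lambda\,I(\lambda)$ and $\liminf_{\lambda\to\infty}\sqrt\lambda\,I(\lambda)$ lie between $(1-\eta)g(x_0)\sqrt{2\pi/(h''(x_0)(1+\eta))}$ and $(1+\eta)g(x_0)\sqrt{2\pi/(h''(x_0)(1-\eta))}$; letting $\eta\to 0$ gives the asymptotic equivalence, and reinstating the factor $e^{-\lambda h(x_0)}$ yields the stated formula. The only delicate point is making the squeeze in the third step fully rigorous — choosing $\delta$ so as to control simultaneously the Taylor remainder $\varepsilon$ and the oscillation of $g$, and verifying that the truncated Gaussian integrals converge to the full ones — but this is routine; the hypothesis $g(x_0)\neq 0$ is precisely what keeps the leading term from vanishing.
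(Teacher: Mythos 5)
The paper does not prove this lemma at all: it is quoted verbatim as a known result from Erd\'elyi's \emph{Asymptotic Expansions}, so there is no in-paper argument to compare against. Your proof is the standard (and correct) one: normalize so that $h(x_0)=0$, discard the region $|x-x_0|\ge\delta$ using the strictly positive minimum of $h$ there to get an exponentially small error, and on the remaining interval squeeze $e^{-\lambda h(x)}g(x)$ between Gaussian integrands via the second-order Taylor expansion and the continuity of $h''$ and $g$, finishing with the substitution $t=\sqrt{\lambda}(x-x_0)$ and $\eta\to 0$. The one small wrinkle is your reduction to $g(x_0)>0$ via positive and negative parts: the part of $g$ that vanishes at $x_0$ does not satisfy the lemma's hypothesis $g(x_0)\neq 0$, so for that piece you need the companion estimate that the local integral is $o\bigl(\lambda^{-1/2}e^{-\lambda h(x_0)}\bigr)$ when the amplitude is small near $x_0$ (which your same upper bound gives); alternatively, skip the reduction entirely by writing the local integral as $g(x_0)\int e^{-\lambda h}+\int e^{-\lambda h}\,(g-g(x_0))$ and bounding the second term by $\eta\int e^{-\lambda h}$. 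In the paper's application $g(y)=1/\sqrt{y(1-y)}$ is positive, so this point is immaterial there.
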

\begin{Proposition} \label{PEF boundedness for Ellipsoid}
    Let $\zeta \in \partial \Omega_p$ for $p>1$, then the point evaluation functional $\Gamma_{\zeta} : \mathcal{D}_{\beta}(\Omega_p) \rightarrow \mathbb{C}$, $f \mapsto f(\zeta)$ is bounded for $\beta  > 2$.
\end{Proposition}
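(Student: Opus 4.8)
The strategy is to show that the diagonal value of the reproducing kernel, $k_{\alpha}(\zeta,\zeta)=\sum_{L}|\zeta^{L}|^{2}/\|\eta^{L}\|^{2}_{A^{2}_{u,\alpha}}$ with $\alpha=-(\beta+1)$, is finite; note $\alpha<-1$ here, so the space is taken in its series form as in \eqref{Ellipsoid_NORM}. Granting finiteness, for any $f=\sum_{L}a_{L}z^{L}\in\mathcal{D}_{\beta}(\Omega_{p})$ the Cauchy--Schwarz inequality gives
\[
\Bigl|\sum_{L}a_{L}\zeta^{L}\Bigr|\le\Bigl(\sum_{L}|a_{L}|^{2}\|\eta^{L}\|^{2}_{A^{2}_{u,\alpha}}\Bigr)^{1/2}\Bigl(\sum_{L}\frac{|\zeta^{L}|^{2}}{\|\eta^{L}\|^{2}_{A^{2}_{u,\alpha}}}\Bigr)^{1/2}=\|f\|_{\mathcal{D}_{\beta}}\,k_{\alpha}(\zeta,\zeta)^{1/2},
\]
so the boundary Taylor series $\sum_{L}a_{L}\zeta^{L}=:f(\zeta)$ converges absolutely and $\|\Gamma_{\zeta}\|\le k_{\alpha}(\zeta,\zeta)^{1/2}$. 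It thus suffices to prove $k_{\alpha}(\zeta,\zeta)<\infty$ when $\beta>2$.

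Using \eqref{Ellipsoid_NORM} with $n=2$, $p_{1}=p_{2}=p$ and $\alpha+1=-\beta$, one has $\|\eta^{L}\|^{2}_{A^{2}_{u,\alpha}}=2^{-\beta}B\!\bigl(\tfrac{l_{1}}{p}+1,\tfrac{l_{2}}{p}+1\bigr)(|L|+2)^{\beta}$, where $B$ is the Euler Beta function (observe $\tfrac{l_{1}}{p}+1+\tfrac{l_{2}}{p}+1=\tfrac{|L|}{p}+2$). Putting $\alpha_{i}:=|\zeta_{i}|^{2}$, so that $\alpha_{1}^{p}+\alpha_{2}^{p}=1$ because $\zeta\in\partial\Omega_{p}$, and grouping monomials by total degree $j=l_{1}+l_{2}$, we are reduced to the convergence of
\[
k_{\alpha}(\zeta,\zeta)=2^{\beta}\sum_{j\ge0}(j+2)^{-\beta}T_{j},\qquad
T_{j}=\Gamma\!\Bigl(\tfrac{j}{p}+2\Bigr)\sum_{l_{1}+l_{2}=j}\frac{\alpha_{1}^{l_{1}}\alpha_{2}^{l_{2}}}{\Gamma(\tfrac{l_{1}}{p}+1)\,\Gamma(\tfrac{l_{2}}{p}+1)}.
\]
It is therefore enough to prove the bound $T_{j}=O(j)$ as $j\to\infty$, since then $k_{\alpha}(\zeta,\zeta)\lesssim\sum_{j}j^{1-\beta}<\infty$ precisely because $\beta>2$.

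To estimate $T_{j}$, first note that if $\alpha_{1}=0$ or $\alpha_{2}=0$ the inner sum has a single term and $T_{j}=\tfrac{j}{p}+1$. Assume then $\alpha_{1},\alpha_{2}>0$. Writing $l_{1}=xj$, $l_{2}=(1-x)j$ and applying Stirling's formula to the three Gamma factors, the summand is of order $j^{-1}\bigl(\tfrac{ep}{j}\bigr)^{j/p}e^{j\psi(x)}$ for $x$ in any fixed subinterval of $(0,1)$, where
\[
\psi(x)=x\log\alpha_{1}+(1-x)\log\alpha_{2}-\tfrac1p\bigl(x\log x+(1-x)\log(1-x)\bigr).
\]
A short computation, which is where the boundary condition $\alpha_{1}^{p}+\alpha_{2}^{p}=1$ enters crucially, shows that $\psi$ attains its maximum on $[0,1]$ at $x^{\ast}=\alpha_{1}^{p}\in(0,1)$ with $\psi(x^{\ast})=0$ and $\psi''(x^{\ast})=-\bigl(p\,x^{\ast}(1-x^{\ast})\bigr)^{-1}<0$. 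Approximating the sum over $l_{1}$ by the corresponding integral and invoking Laplace's method (Lemma~\ref{Laplace Method}) then yields $\sum_{l_{1}+l_{2}=j}\frac{\alpha_{1}^{l_{1}}\alpha_{2}^{l_{2}}}{\Gamma(l_{1}/p+1)\Gamma(l_{2}/p+1)}\asymp j^{-1/2}\bigl(\tfrac{ep}{j}\bigr)^{j/p}$; since $\Gamma(\tfrac{j}{p}+2)\asymp j^{3/2}\bigl(\tfrac{j}{pe}\bigr)^{j/p}$ by Stirling, the factors $\bigl(\tfrac{ep}{j}\bigr)^{j/p}$ and $\bigl(\tfrac{j}{pe}\bigr)^{j/p}$ cancel and $T_{j}\asymp j$, which gives the claimed $O(j)$ bound and hence $k_{\alpha}(\zeta,\zeta)<\infty$ for $\beta>2$.

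The main obstacle is this last step. The naive estimate ``$T_{j}\le(j+1)\cdot(\text{largest summand})$'' only produces $T_{j}=O(j^{3/2})$, which would force $\beta>\tfrac52$; to reach $T_{j}=O(j)$ one must genuinely exploit the Gaussian concentration of the (log-concave in $l_{1}$) summand within a window of width $O(\sqrt{j})$ about $x^{\ast}j$. Turning this into a rigorous argument requires controlling the replacement of the sum by the integral $\int_{0}^{1}$ uniformly in $j$ and separately bounding the $O(1)$ terms near $l_{1}=0$ and $l_{1}=j$, where Stirling's expansion degrades; those terms are exponentially smaller than the peak and are harmless. The remaining computations are routine.
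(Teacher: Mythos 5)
Your argument is correct and follows essentially the same route as the paper: Cauchy--Schwarz reduces boundedness of $\Gamma_\zeta$ to finiteness of the diagonal kernel, and the degree-$j$ slices are controlled by Laplace's method at the critical point $x^\ast=|\zeta_1|^{2p}$, where the boundary relation $|\zeta_1|^{2p}+|\zeta_2|^{2p}=1$ forces $\psi(x^\ast)=0$. Your bound $T_j=O(j)$ is exactly the paper's claim $S(j)=O(1)$ after factoring $\Gamma(\tfrac{j}{p}+2)=(\tfrac{j}{p}+1)\Gamma(\tfrac{j}{p}+1)$, and both write-ups leave the same sum-versus-integral comparison (which you correctly flag) at the level of a sketch.
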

\begin{proof}
    Let $J$ be a multi-index $f(z) = \sum_J a_J z^J \in \mathcal{D}_{\beta}(\Omega_p)$ and apply Holder inequality on $|\sum_J a_J \zeta^J|^2$ we will have:
    \begin{align*}
        \left| \sum_J a_J \zeta^J \right|^2 &\leq 
        \left( \sum_J |a_J|^2 \frac{\Gamma\left(\frac{j_1}{p}+1\right) \Gamma\left(\frac{j_2}{p}+1\right)}{\Gamma\left(\frac{j_1 + j_2}{p}+2\right)} (|J|+2)^\beta \right) \\
        &\quad \cdot \left( \sum_J \frac{1}{(|J|+2)^\beta} \frac{\Gamma\left(\frac{j_1 + j_2}{p}+2\right)}{\Gamma\left(\frac{j_1}{p}+1\right) \Gamma\left(\frac{j_2}{p}+1\right)} |\zeta^J|^2 \right) \\
        &= ||f||^2_{\beta} \left( \sum_J \frac{1}{(|J|+2)^\beta} \frac{\Gamma\left(\frac{j_1 + j_2}{p}+2\right)}{\Gamma\left(\frac{j_1}{p}+1\right) \Gamma\left(\frac{j_2}{p}+1\right)} |\zeta^J|^2 \right). \\
        & \asymp ||f||^2_{\beta} \left( \sum_{j \geq 0} \frac{1}{(j+2)^{\beta - 1}} \sum_{j_1+j_2 = j}\frac{\Gamma\left(\frac{j_1 + j_2}{p}+1\right)}{\Gamma\left(\frac{j_1}{p}+1\right) \Gamma\left(\frac{j_2}{p}+1\right)} |\zeta_1|^{2j_1} |\zeta_2|^{2j_2} \right).
    \end{align*}
    Now the claim is that:
    \begin{align} \label{gamma sum in j/p}
        S(j) = \sum_{j_1+j_2 = j} \frac{\Gamma\left(\frac{j_1 + j_2}{p}+1\right)}{\Gamma\left(\frac{j_1}{p}+1\right) \Gamma\left(\frac{j_2}{p}+1\right)} |\zeta_1|^{2j_1} |\zeta_2|^{2j_2} 
        &\leq C,
    \end{align}
    where $C>0$ is a positive constant independent of $j$. Note that this finishes the proof as we will have:
    \begin{align*}
        \left| \sum_J a_J \zeta^J \right|^2 &\lesssim ||f||^2_{\beta} 
        \left( \sum_{j \geq 0} \frac{1}{(j+2)^{\beta - 1}} 
        \right).
    \end{align*}
    And the sum is finite when $\beta > 2$.

    To prove the claim, we compare the sum in \eqref{gamma sum in j/p} with the following integral:
    \begin{align*}
        I(\lambda) = \int_0^{\lambda} \frac{\Gamma(\lambda+1)}{\Gamma(x+1) \Gamma(\lambda - x+1)} r^x (1-r)^{\lambda - x} \,dx 
        &\lesssim \int_0^{\lambda} \frac{\lambda^\lambda}{x^x (\lambda - x)^{\lambda - x}} \sqrt{\frac{\lambda}{x (\lambda - x)}} r^x (1-r)^{\lambda - x} \,dx \\
        &= \sqrt{\lambda} \int_0^{1} \left( \frac{1}{y^y (1-y)^{1 - y}} r^y (1-r)^{1-y} \right)^{\lambda} \sqrt{\frac{1}{y (1-y)}} \,dy \\
        &= \sqrt{\lambda} \int_0^{1} e^{-\lambda h(y)} g(y) \,dy,
    \end{align*}
    where $\lambda = \frac{j}{p}$, $r = |\zeta_1|^{2p}$, $h(y) = y \log(\frac{y}{r}) + (1-y) \log(\frac{1-y}{1-r})$ and $g(y) = \sqrt{\frac{1}{y(1-y)}}$. Note that in the case of $r = 0$ or $r = 1$, $S(j) = 1$ and we get what we want, so we assume $r \in (0,1)$. In such a case, it is straightforward to verify that $S(j) \leq 2pI (\lambda)$. Now fix $\epsilon > 0$ such that $r \in (\epsilon, 1- \epsilon)$. Note that on $(\epsilon, 1- \epsilon)$ $h$ and $g$ satisfy the conditions of the lemma \ref{Laplace Method} and $\int_o^1 g(y) dy <+ \infty$ so we have:
    \begin{align*}
        \lim_{\lambda \rightarrow \infty}\sqrt{\lambda} \int_0^{1} e^{-\lambda h(y)} g(y) dy &\lesssim \lim_{\lambda \rightarrow \infty} \sqrt{\lambda} \left( \int_0^{\epsilon} e^{-\lambda h(y)} g(y) dy +  \int_{1-\epsilon}^{1} e^{-\lambda h(y)} g(y) dy +  \sqrt{\frac{2\pi}{\lambda h''(r)}} e^{-\lambda h(r)} g(r) \right) \\
        &\leq \sqrt{2\pi} + \lim_{\lambda \rightarrow \infty} \sqrt{\lambda} \left( \int_o^1 g(y) dy \right) \left( e^{- \lambda h(\epsilon)} + e^{- \lambda h(1-\epsilon)} \right) = \sqrt{2 \pi} < +\infty \qedhere
    \end{align*}
\end{proof}
In a similar fashion to Theorem 6 in \cite{Kosinski_Vavitsas2023}, we can prove the following corollary. We include the proof for the sake of completeness.
\begin{Corollary} \label{Ellipsoid_Results}
    Let $q$ be an irreducible polynomial, non-vanishing on $\Omega_p$, moreover assume that $q$ has finitely many zeros on $\partial \Omega_p$, then $q$ is cyclic on $\mathcal{D}_{\beta}(\Omega_p)$ if and only if $\beta \leq 2$.
\end{Corollary}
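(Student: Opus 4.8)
The plan is to prove the two implications separately. The forward implication ($q$ cyclic $\Rightarrow\beta\le2$) is short; the converse, which is the substantive one, is obtained from the radial‑dilation criterion in the spirit of Proposition~\ref{f_cyc_ellipsoid} and of Theorem~6 in \cite{Kosinski_Vavitsas2023}.

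For the forward implication I argue by contraposition: suppose $\beta>2$. Let $\zeta\in\partial\Omega_p$ be a zero of $q$ (if $\mathcal{Z}(q)\cap\partial\Omega_p=\emptyset$ then $q$ is cyclic for every $\beta$ by Theorem~\ref{Radial_Dilation}, so we may assume this intersection is nonempty; for $p=1$ the statement is contained in \cite{Kosinski_Vavitsas2023}). By Proposition~\ref{PEF boundedness for Ellipsoid} the point‑evaluation functional $\Gamma_\zeta$ is bounded on $\mathcal{D}_\beta(\Omega_p)$, so by the Remark following Theorem~\ref{PEF_non_cyc} every polynomial multiple $pq$ lies in the proper closed subspace $\ker\Gamma_\zeta$ (proper because $\Gamma_\zeta(1)=1\neq0$). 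Hence $[q]\subseteq\ker\Gamma_\zeta\subsetneq\mathcal{D}_\beta(\Omega_p)$ and $q$ is not cyclic. One could equally invoke Theorem~\ref{capacity condition}, since boundedness of $\Gamma_\zeta$ is the same as $k_\alpha(\zeta,\zeta)<\infty$, i.e. $\operatorname{Cap}_\alpha(\{\zeta\})>0$.

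For the converse, assume $\beta\le2$. Since $q$ is zero‑free on $\Omega_p$, Theorem~\ref{Radial_Dilation} reduces the claim to the bound
\[
\sup_{0<r<1}\left\|q/q_r\right\|_{\mathcal{D}_\beta(\Omega_p)}<+\infty ,\qquad q_r(z)=q(rz).
\]
As $\Omega_p$ is a complete Reinhardt, hence balanced, domain, $z\in\overline{\Omega_p}$ and $r<1$ force $rz\in\Omega_p$, so $q_r$ is zero‑free on $\overline{\Omega_p}$; thus $q/q_r$ extends holomorphically past $\partial\Omega_p$ and lies in $\mathcal{D}_\beta(\Omega_p)$ for each fixed $r$, and it is only the uniformity as $r\to1^-$ that must be shown. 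The obstruction to uniformity sits entirely near the finitely many boundary zeros $\zeta^{(1)},\dots,\zeta^{(N)}$ of $q$. After composing with a rotational isometry $z_j\mapsto e^{i\theta_j}z_j$ of $\mathcal{D}_\beta(\Omega_p)$ I may place a given $\zeta^{(i)}$ on the positive distinguished set; the supporting complex hyperplane of $\Omega_p$ there is $\{\ell_i=1\}$ with $\ell_i(z)=r_1^{2p-1}z_1+r_2^{2p-1}z_2$ and $r_1^{2p}+r_2^{2p}=1$, the equality case of Hölder's inequality gives $\overline{\Omega_p}\cap\{\ell_i=1\}=\{\zeta^{(i)}\}$, and $f_i:=1-\ell_i$ is exactly the model polynomial of Proposition~\ref{f_cyc_ellipsoid}. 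Using the strict convexity of $\Omega_p$ and the tangency of $\mathcal{Z}(q)$ to $\partial\Omega_p$ at $\zeta^{(i)}$, one controls $q/q_r$ near $\zeta^{(i)}$ in terms of $\bigl(f_i/(f_i)_r\bigr)^{k_i}$, with $k_i$ the order of tangency; plugging this into the monomial‑norm description of $\mathcal{D}_\beta(\Omega_p)$ and the Stirling estimates already used in Proposition~\ref{f_cyc_ellipsoid} shows that $\sup_r\|q/q_r\|_{\mathcal{D}_\beta}$ is finite precisely when $\beta\le2$. It is convenient to organize this through the polynomial $P=\prod_i f_i^{k_i}$, which is cyclic as a product of cyclic polynomials; what is actually needed is the stronger uniform radial‑dilation bound for $P$, which is then transferred to $q$. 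This yields the cyclicity of $q$.

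I expect the crux to be exactly this last step — passing from the model polynomial to a general irreducible $q$. The Dirichlet‑type norm is neither local nor monotone under $|f|\le|g|$, so one cannot control $q/q_r$ by a pointwise bound against the model; instead one must estimate its Taylor coefficients, which forces the use of the precise growth of the monomial norms $\|z^L\|_{\mathcal{D}_\beta}$ together with the fine geometry of $\partial\Omega_p$ near each $\zeta^{(i)}$ — a point that is delicate at the coordinate‑axis points, where the Levi form of $\partial\Omega_p$ degenerates for $p>1$. A secondary difficulty is bookkeeping the tangency exponents $k_i$ and checking that replacing the model function by a fixed power of it does not move the critical value $\beta=2$.
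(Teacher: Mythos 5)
Your forward implication ($\beta>2\Rightarrow q$ not cyclic) is fine and is what the paper intends via Proposition~\ref{PEF boundedness for Ellipsoid} together with the Remark after Theorem~\ref{PEF_non_cyc}. The gap is in the converse, which is the substantive half. You propose to verify the radial-dilation criterion for $q$ itself, i.e.\ $\sup_{0<r<1}\|q/q_r\|_{\mathcal{D}_\beta(\Omega_p)}<\infty$, by ``controlling $q/q_r$ near $\zeta^{(i)}$ in terms of $\bigl(f_i/(f_i)_r\bigr)^{k_i}$.'' That is precisely the step that does not go through, and you say so yourself: the norm is neither local nor monotone under pointwise domination, so a pointwise comparison of $q/q_r$ with powers of the model quotient yields no norm bound, and no mechanism for estimating the Taylor coefficients of $q/q_r$ for a general irreducible $q$ is supplied. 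As written, the main direction is a description of the difficulty rather than an argument, and it is not even clear that the dilation bound holds for general $q$ (the criterion of Theorem~\ref{Radial_Dilation} is sufficient, not necessary).

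The paper avoids any dilation estimate for $q$ itself. It uses Proposition~\ref{f_cyc_ellipsoid} only for the explicit supporting-hyperplane polynomials $s_j$, one per boundary zero $\zeta_j$, and then transfers cyclicity to $q$ by two ingredients you do not have. First, \L{}ojasiewicz's inequality gives $|q(z)|\geq C\operatorname{dist}(z,\mathcal{Z}(q)\cap\partial\Omega_p)^m$ on $\partial\Omega_p$, which combined with $\operatorname{dist}(z,\zeta_j)\geq C_j|s_j(z)|$ yields $|q|\gtrsim\prod_j|s_j|^m$ on $\partial\Omega_p$ and hence on $\Omega_p$. Second, the division trick: after increasing $m$, the function $Q=\prod_j s_j^m/q$ lies in $\mathcal{D}_\beta(\Omega_p)$; the product $qQ=\prod_j s_j^m$ is cyclic as a finite product of cyclic multipliers; and one passes from the cyclicity of $qQ$ to that of $q$ by choosing $p_1$ with $\|p_1qQ-1\|<\epsilon/2$, approximating $Q$ by a polynomial $p_2$ in norm, and using that $p_1q$ is a multiplier, so that $\|p_1p_2q-1\|\leq\|p_1q\|_{\operatorname{Mult}}\|p_2-Q\|+\|p_1qQ-1\|<\epsilon$. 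Your remark that one should work with $P=\prod_i f_i^{k_i}$ and ``transfer'' to $q$ points in the right direction but misidentifies the mechanism: no uniform dilation bound for $P$ beyond that of each factor is needed, and the transfer is the \L{}ojasiewicz-plus-division argument, not a comparison of dilated quotients. Replacing your attempted estimate of $\|q/q_r\|$ by these two ingredients closes the gap.
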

\begin{proof}
    Let $\{ \zeta_j \}_{j = 1,\ldots,k} = \mathcal{Z}(q) \cap \partial \Omega_p$, and let $s_j$ be polynomials of the form similar to those in Proposition~\ref{f_cyc_ellipsoid}, vanishing on $\partial\Omega_p$ only at $\zeta_j$. It is straightforward to observe that there exists a constant $C_j > 0$ such that, for any $z \in \partial \Omega_p$,
    \begin{equation*}
        \operatorname{dist}(z, \zeta_j) \geq C_j |s_j(z)|.
    \end{equation*}
    On the other hand, Łojasiewicz’s inequality \cite{KrantzParks2002} gives that there exists a positive integer $m$ and a constant $C > 0$ such that
    \begin{equation*}
        |q(z)| \geq C \operatorname{dist}(z, \mathcal{Z}(q) \cap \partial \Omega_p)^m
    \end{equation*}
    for any $z \in \partial \Omega_p$. Combining these facts, together with the fact that the $s_j$'s are uniformly bounded on $\partial \Omega_p$, we obtain for some constant $C' > 0$:
    \begin{equation*}
        |q(z)| \geq C' \prod_{j = 1}^k |s_j(z)|^m
    \end{equation*}
    for any $z \in \partial \Omega_p$, and consequently on $\Omega_p$. By increasing $m$ sufficiently, we may assume that 
    \[
    Q(z) = \frac{\prod_{j = 1}^k s_j(z)^m}{q(z)} \in \mathcal{D}_\beta(\Omega_p).
    \]
    Since $q$ is a multiplier, $q Q$ is cyclic as a finite product of cyclic functions. Fix $\epsilon > 0$, and let $p_1$ and $p_2$ be polynomials such that
    \[
    \| p_1 q Q - 1 \|_{\mathcal{D}_\beta} < \frac{\epsilon}{2}
    \]
    and
    \[
    \| p_2 - Q \|_{\mathcal{D}_\beta} < \| p_1 q \|^{-1}_{\operatorname{Mult}(\mathcal{D}_\beta)} \frac{ \epsilon }{2},
    \]
    where $\| p_1 q \|_{\operatorname{Mult}(\mathcal{D}_\beta)}$ denotes the multiplier norm of $p_1 q$, and the inequality follows from the density of polynomials in $\mathcal{D}_\beta$. Then, we have
    \begin{equation*}
        \| p_1 p_2 q - 1 \|_{\mathcal{D}_\beta} 
        \leq \| p_1 p_2 q - p_1 q Q \|_{\mathcal{D}_\beta} + \| p_1 q Q - 1 \|_{\mathcal{D}_\beta} 
        < \epsilon. \qedhere
    \end{equation*}
\end{proof}

\section{Some Results on the Bidisk}
The following is the main result of this section:
\begin{Theorem} \label{Bidisk_Results}
    Let $f$ be a polynomial that does not vanish on $\mathbb{D}^2$. Then:
    \begin{enumerate}[(a)]
        \item \label{Bidisk_Results_a} If $f$ vanishes on $\mathbb{T}^2$ at finitely many points, then $f$ is cyclic in $\mathcal{D}_{\beta}(\mathbb{D}^2)$ for $\beta \leq 3/2$, and non-cyclic for $\beta > 2$.
        \item \label{Bidisk_Results_b} If $\dim(\mathcal{Z}(f) \cap \mathbb{T}^2) = 1$, then $f$ is cyclic in $\mathcal{D}_{\beta}(\mathbb{D}^2)$ if and only if $\beta \leq 1$.
        \item \label{Bidisk_Results_c} For $\beta > 2$, $f$ is cyclic in $\mathcal{D}_{\beta}(\mathbb{D}^2)$ if and only if it does not vanish on $\overline{\mathbb{D}}^2$.
    \end{enumerate}
\end{Theorem}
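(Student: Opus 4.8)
The plan is to establish the three parts in the order (c), (a), (b): part (c) supplies the non-cyclic direction of (a) and of (b) for $\beta>2$, and part (a) is reused for the atoral factor appearing in (b). Recall that $\mathcal{D}_\beta(\mathbb{D}^2)=A^2_{u,\alpha}(\mathbb{D}^2)$ with $u=\log\max\{|z_1|,|z_2|\}$ and $\beta=-(\alpha+1)$ has the orthogonal monomial basis $\|z^L\|^2=(|L|+2)^{\beta}$, so $\|\sum_L a_Lz^L\|^2=\sum_L|a_L|^2(|L|+2)^{\beta}$; polynomials are dense and act as bounded multipliers, and $\operatorname{Supp}(\mu_u)=\mathbb{T}^2$. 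For part (c): if $f$ is zero-free on $\overline{\mathbb{D}}^2$ then $1/f$ is holomorphic on $\rho\mathbb{D}^2$ for some $\rho>1$, so its Taylor coefficients decay geometrically and $1/f\in\mathcal{D}_\beta(\mathbb{D}^2)$; the Taylor truncations $p_n\to 1/f$ give $\|p_nf-1\|=\|f(p_n-1/f)\|\le\|f\|_{\mathrm{Mult}}\|p_n-1/f\|\to0$, so $f$ is cyclic. Conversely, if $f(\zeta)=0$ with $\zeta\in\overline{\mathbb{D}}^2$: for $\zeta\in\mathbb{D}^2$ invoke Theorem~\ref{PEF_non_cyc}; for $\zeta\in\partial\mathbb{D}^2$ and $\beta>2$ the diagonal kernel value $k_\alpha(\zeta,\zeta)=\sum_{l_1,l_2\ge0}(l_1+l_2+2)^{-\beta}|\zeta_1|^{2l_1}|\zeta_2|^{2l_2}$ is finite (bounded by $\sum_{j\ge0}(j+1)(j+2)^{-\beta}$ when $|\zeta_1|=|\zeta_2|=1$, using $\beta>2$, and by $(\sum_l(l+2)^{-\beta})(1-|\zeta_i|^2)^{-1}$ when some $|\zeta_i|<1$), so $\Gamma_\zeta$ is bounded and the Remark after Theorem~\ref{PEF_non_cyc} applies.

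For part (a), non-cyclicity when $\beta>2$ is immediate from (c). For $\beta\le 3/2$ I follow the template of Corollary~\ref{Ellipsoid_Results}. A preliminary lemma (continuity of roots/Hurwitz for a local root branch of $f$) shows that a polynomial zero-free on $\mathbb{D}^2$ with only finitely many zeros on $\mathbb{T}^2$ has no further zeros on $\overline{\mathbb{D}}^2$. For the model $s(z)=2-z_1-z_2$, zero-free on $\overline{\mathbb{D}}^2$ except at $(1,1)$, I apply Theorem~\ref{Radial_Dilation}: with $w=(z_1+z_2)/2$ one has $s/s_r=(1-w)/(1-rw)=1-(1-r)\sum_{k\ge1}r^{k-1}w^k$, the $w^k$ are mutually orthogonal, and $\|w^k\|^2=2^{-2k}\binom{2k}{k}(k+2)^{\beta}\asymp k^{\beta-1/2}$ by Stirling, so $\|s/s_r\|^2\asymp 1+(1-r)^2\sum_k r^{2(k-1)}k^{\beta-1/2}\asymp 1+(1-r)^{3/2-\beta}$, bounded on $(0,1)$ precisely when $\beta\le 3/2$; a coordinate rotation $z_i\mapsto\bar\zeta_iz_i$ (an isometry) yields a cyclic model $s_\zeta$ for any $\zeta\in\mathbb{T}^2$. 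For general $f$ with torus zeros $\zeta_1,\dots,\zeta_k$, take the rotated models $s_j$; since $|s_j(z)|\lesssim\operatorname{dist}(z,\zeta_j)$ and $\prod_j|s_j(z)|\lesssim\operatorname{dist}(z,\{\zeta_1,\dots,\zeta_k\})$ on $\mathbb{T}^2$, Łojasiewicz's inequality gives $|f(z)|\gtrsim\prod_j|s_j(z)|^{m_0}$ on $\mathbb{T}^2$, hence on $\overline{\mathbb{D}}^2$ (e.g.\ by the maximum principle); increasing $m$ then makes $Q=\prod_j s_j^m/f$ smooth enough on $\overline{\mathbb{D}}^2$ to lie in $\mathcal{D}_\beta$, and the density-plus-multiplier argument of Corollary~\ref{Ellipsoid_Results} (products of cyclic functions are cyclic) shows $f$ is cyclic.

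For part (b), factor $f=f_{\mathrm{at}}f_{\mathrm{tor}}$ into its atoral and toral parts; both are zero-free on $\mathbb{D}^2$, $f_{\mathrm{at}}$ has finitely many zeros on $\mathbb{T}^2$, and $\mathcal{Z}(f_{\mathrm{tor}})\cap\mathbb{T}^2=\mathcal{Z}(f)\cap\mathbb{T}^2$ is a finite union of real-analytic closed curves $\gamma_1,\dots,\gamma_N$. For $\beta\le1$: $f_{\mathrm{at}}$ is cyclic by part (a) (as $\beta\le1\le 3/2$), and each irreducible toral factor $g_i$ is cyclic by Theorem~\ref{Radial_Dilation} — in the basic case $g_i=z_1^{a}z_2^{b}-c$ one computes, with $w=z_1^az_2^b$, that $g_i/g_{i,r}=1-(1-r^{a+b})\sum_{k\ge1}r^{(a+b)(k-1)}w^k$ with $\|w^k\|^2=(k(a+b)+2)^{\beta}$, so $\|g_i/g_{i,r}\|^2\asymp 1+(1-r)^{1-\beta}$, bounded iff $\beta\le1$, and a general toral factor is handled by the analogous estimate governed by the one-dimensionality of $\gamma_i$; hence $f$ is a finite product of cyclic multipliers, thus cyclic. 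For $\beta>1$: by Theorem~\ref{capacity condition} it suffices to place a probability measure $\mu$ on some $\gamma_i$ with $I_\alpha[\mu]<\infty$, and arclength works — as a model, on $\{z_1^az_2^b=c\}\cap\mathbb{T}^2$ the moments $\int z_1^{l_1}z_2^{l_2}\,d\mu$ are concentrated on the single lattice ray $(l_1,l_2)=k(a,b)$, so $I_\alpha[\mu]\asymp\sum_{k\ge0}(k(a+b)+2)^{-\beta}$, which converges exactly for $\beta>1$; the general case reduces, via a stationary-phase estimate for $\int_{\gamma_i}z_1^{l_1}z_2^{l_2}$, to the same one-variable series.

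The delicate step — and the one I expect to be the main obstacle — is the structural and harmonic-analytic input in part (b): describing the curves $\gamma_i=\mathcal{Z}(f_{\mathrm{tor}})\cap\mathbb{T}^2$ precisely enough (each is the zero set on $\mathbb{T}^2$ of an irreducible polynomial zero-free on $\mathbb{D}^2$, hence, up to swapping the variables, a monomial circle $\{z_1^az_2^b=c\}$ or the graph of an inverse finite Blaschke product) to prove simultaneously that arclength on $\gamma_i$ has finite $\alpha$-energy iff $\beta>1$ and that the matching radial-dilation bound $\sup_{0<r<1}\|g_i/g_{i,r}\|<\infty$ holds for $\beta\le1$; in both directions everything collapses to the one-variable model series $\sum_k k^{-\beta}$, but verifying this uniformly over toral factors requires care. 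A secondary technical point, common to (a) and (b), is the verification that $Q=\prod s_j^m/f\in\mathcal{D}_\beta(\mathbb{D}^2)$ for $m$ large, i.e.\ that high-order vanishing at the finitely many torus zeros places $Q$ in a Hölder/Sobolev class contained in $\mathcal{D}_\beta$.
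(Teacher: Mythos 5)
Your treatments of parts (a) and (c) are essentially the paper's. For (c) the paper proves that $\mathcal{D}_\beta(\mathbb{D}^2)$ is an algebra for $\beta>2$ (Proposition~\ref{Algebra_Prop}) and deduces that $f$ and $1/f$ are multipliers; your direct observation that $1/f$ has geometrically decaying coefficients is an acceptable shortcut, and your point-evaluation bound on all of $\partial\mathbb{D}^2$ (not just $\mathbb{T}^2$) is if anything slightly more careful than the paper's remark. For (a), your radial-dilation estimate for $2-z_1-z_2$ is the same computation the paper runs through the composition operator $C_F$ in Proposition~\ref{cuma lemma} (both hinge on $\|(z_1+z_2)^j\|^2\asymp 4^j(j+2)^{\beta-1/2}$), and the Łojasiewicz reduction to products of models is Corollary~\ref{Ellipsoid_Results} verbatim; the unjustified step ``$Q=\prod s_j^m/f\in\mathcal{D}_\beta$ for $m$ large'' that you flag is glossed over in the paper as well.

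The genuine gap is in part (b), precisely where you predicted it. The paper's Proposition~\ref{1-z_Prop} only covers $\zeta-z_i$ (your monomial-circle computation is a mild generalization of it), and the general irreducible polynomial with a one-dimensional torus zero set is handled in Remark~\ref{Inf_Zero_Remark} by entirely different means: for $\beta\le1$, a determinantal/transfer-function argument in the style of B\'en\'eteau et al.\ (a unitary $U$ and a vector polynomial $B$ with $(I-A(z))B(z)\in f\,\mathbb{C}^{n+m}[z_1,z_2]$, orthogonality of the family $vA^{kd}B$, and the bound $\|vA^{kd}B\|^2\lesssim(kd+1)^\beta$ forcing $\langle g,vB\rangle=0$ for $g\in[f]^\perp$); and for $\beta>1$, the norm comparison $\|h\|_{\mathfrak{D}_{\beta/2}(\mathbb{D}^2)}\le\|h\|_{\mathcal{D}_\beta(\mathbb{D}^2)}$ combined with the known non-cyclicity in the anisotropic space. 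Your substitutes for both directions are incomplete. (i) The radial-dilation trick you use for $z_1^az_2^b-c$ works because the powers of a single monomial $w=z_1^az_2^b$ are mutually orthogonal and absorb the whole quotient $g/g_r$; for a Blaschke-graph-type toral factor such as $q(z_1)-z_2\tilde q(z_1)$ there is no such $w$, and ``the analogous estimate governed by the one-dimensionality of $\gamma_i$'' does not exist in the form you assert -- this is exactly why the determinantal machinery is invoked. (ii) For $\beta>1$, finiteness of the energy of arclength on a general real-analytic curve in $\mathbb{T}^2$ does not simply ``collapse to $\sum_k k^{-\beta}$'': the stationary-phase bound $|\hat\mu(L)|\lesssim|L|^{-1/2}$ degenerates near flat points of the curve and in neighboring lattice directions, so summing $|\hat\mu(L)|^2(|L|+2)^{-\beta}$ requires a uniform van der Corput analysis you have not supplied. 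Finally, the structural classification you lean on (each toral factor's torus zero set is a monomial circle or an inverse Blaschke graph) is itself a nontrivial theorem that must be proved or cited. So the proposal is sound and matches the paper on (a) and (c), but part (b) for non-monomial toral factors is not established by the arguments given.
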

Here, part~\eqref{Bidisk_Results_a} is proved in Proposition~\ref{cuma lemma} and the Remark after that. Part~\eqref{Bidisk_Results_b} is a consequence of Proposition~\ref{1-z_Prop} and Remark~\ref{Inf_Zero_Remark}. For Part~\eqref{Bidisk_Results_c} in Proposition~\ref{Algebra_Prop} we show that in such a case the space is an algebra, so it is straightforward to show that a non-vanishing $f$ on $\overline{\mathbb{D}}^2$ and its inverse are both multipliers in the space, and cyclicity follows.

\begin{Proposition} \cite[Proposition 3.5]{PS_HardyBergman} \label{PS_Inc_Prop}
    Let $u$ and $v$ be two continuous plurisubharmonic exhaustion functions for the domain $\Omega \subset \mathbb{C}^n$ and assume that $(dd^c u)^n$ is compactly supported, then $A^2_{v,\alpha}(\Omega) \subset A^2_{u,\alpha}(\Omega)$ for any $\alpha \geq -1$.
\end{Proposition}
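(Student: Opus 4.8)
The plan is to strip the inclusion down to a pointwise comparison of Monge--Ampère masses near $\partial\Omega$, and then to close that comparison using the maximality of $u$ away from the support of $(dd^{c}u)^{n}$. Fix $f\in A^{2}_{v,\alpha}(\Omega)$, put $\psi=|f|^{2}$ (continuous and plurisubharmonic), let $K=\Supp((dd^{c}u)^{n})\Subset\Omega$, and choose $\delta>0$ with $K\subseteq\{u\le-\delta\}$. The first observation is that the part of the $u$-norm coming from levels $r<-\delta$ is automatically finite: there $\mu_{u_{r}}=(dd^{c}u_{r})^{n}$ is carried by the level set $S_{r}(u)\subseteq\{u\le-\delta\}\Subset\Omega$, with total mass $\int_{\Omega}(dd^{c}u)^{n}$ independent of $r$ (a Stokes computation, using that $u_{r}=u$ off a compact set and that a continuous plurisubharmonic exhaustion is bounded below), while $\psi$ is bounded on $\{u\le-\delta\}$ and $\int_{-\infty}^{-\delta}|r|^{\alpha}e^{r}\,dr<\infty$. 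So it suffices to bound $\int_{-\delta}^{0}|r|^{\alpha}e^{r}\mu_{u_{r}}(\psi)\,dr$ in terms of $\|f\|_{A^{2}_{v,\alpha}}$.

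Next I would normalise the comparison between $u$ and $v$. Since $(dd^{c}u)^{n}\equiv0$ on $\Omega\setminus K$, $u$ is maximal there; as $u$ and $v$ are continuous, $v$ is bounded on $\partial K$ and $u$ is finite on it, so for $C\ge1$ large enough $Cv\le u$ on $\partial K$, while $\limsup_{z\to\partial\Omega}(Cv-u)\le0$. The comparison principle for maximal plurisubharmonic functions then gives $Cv\le u$ on $\Omega\setminus K$, and after enlarging $C$ once more, $Cv\le u$ on all of $\Omega$. As $\mu_{(Cv)_{r}}=C^{\,n}\mu_{v_{r/C}}$, the substitution $r=Cs$ together with $e^{Cs}\le e^{s}$ for $s<0$, $C\ge1$, gives $\|f\|_{A^{2}_{Cv,\alpha}}\lesssim\|f\|_{A^{2}_{v,\alpha}}$, so we may replace $v$ by $Cv$ and thereby assume $v\le u$ on $\Omega$. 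In particular $B_{t}(u)\subseteq B_{t}(v)$ for every $t$.

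Now apply the Lelong--Jensen formula: for any continuous plurisubharmonic exhaustion $w$,
\[
\mu_{w_{r}}(\psi)=\int_{B_{r}(w)}\psi\,(dd^{c}w)^{n}+\int_{-\infty}^{r}\nu_{\psi}(w,t)\,dt,\qquad \nu_{\psi}(w,t):=\int_{B_{t}(w)}dd^{c}\psi\wedge(dd^{c}w)^{n-1}\ \ge\ 0.
\]
Integrating against $|r|^{\alpha}e^{r}\,dr$ and using Fubini, one gets $\|f\|^{2}_{A^{2}_{w,\alpha}}=\int_{\Omega}\psi\,\Psi_{\alpha}(w)\,(dd^{c}w)^{n}+\int_{-\infty}^{0}\nu_{\psi}(w,t)\,\Psi_{\alpha}(t)\,dt$ with $\Psi_{\alpha}(t)=\int_{t}^{0}|s|^{\alpha}e^{s}\,ds$; a further integration by parts (the boundary terms vanish since $\Psi_{\alpha}(0)=0$) rewrites the second summand as $\int_{\Omega}\Theta_{\alpha}(w)\,dd^{c}\psi\wedge(dd^{c}w)^{n-1}$, where $\Theta_{\alpha}(t)=\int_{t}^{0}\Psi_{\alpha}(s)\,ds$, and in fact gives the pleasant closed form $\|f\|^{2}_{A^{2}_{w,\alpha}}\asymp\int_{\Omega}|f|^{2}\,|w|^{\alpha}e^{w}\,dw\wedge d^{c}w\wedge(dd^{c}w)^{n-1}$. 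For $w=u$ the compact support of $(dd^{c}u)^{n}$ makes the first summand the finite constant $\int_{K}\psi\,\Psi_{\alpha}(u)\,(dd^{c}u)^{n}$. Hence the whole inclusion reduces to the pointwise estimate $\nu_{\psi}(u,t)\lesssim\nu_{\psi}(v,t)$ for $t$ close to $0$, since $f\in A^{2}_{v,\alpha}$ already forces $\int_{-\infty}^{0}\nu_{\psi}(v,t)\,\Psi_{\alpha}(t)\,dt<\infty$.

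For $n=1$ this last estimate is immediate: $(dd^{c}u)^{0}=(dd^{c}v)^{0}=1$ and $B_{t}(u)\subseteq B_{t}(v)$ give $\nu_{\psi}(u,t)=\int_{B_{t}(u)}dd^{c}\psi\le\int_{B_{t}(v)}dd^{c}\psi=\nu_{\psi}(v,t)$. The hard part is $n\ge2$, where $(dd^{c}u)^{n-1}$ and $(dd^{c}v)^{n-1}$ admit no pointwise comparison, and where the compact-support hypothesis is genuinely needed (simple examples show $\nu_{\psi}(u,t)$ can dominate $\nu_{\psi}(v,t)$ when $v\le u$ but $(dd^{c}u)^{n}$ is not compactly supported). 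I would handle it by integrating $\nu_{\psi}(u,t)$ by parts: two applications of Stokes' theorem on $B_{t}(u)$, using that $u-t$ vanishes on $S_{t}(u)$, give an identity of the shape
\[
\int_{B_{t}(u)}(t-u)\,dd^{c}\psi\wedge(dd^{c}u)^{n-1}=\int_{S_{t}(u)}\psi\,d^{c}u\wedge(dd^{c}u)^{n-1}-\int_{K}\psi\,(dd^{c}u)^{n},
\]
which trades the mixed mass for a surface integral over $S_{t}(u)$ — comparable to $\mu_{u_{t}}(\psi)$ — plus a constant; maximality of $u$ off $K$ is precisely what kills the interior contribution. Feeding this back, combined with $0\le t-u\le t-v$ on $B_{t}(u)$, the inclusion $B_{t}(u)\subseteq B_{t}(v)$, and the corresponding identity for $v$ (which only adds an interior term instead of subtracting one), yields the domination. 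The genuinely delicate points — and the main obstacle of the argument — live exactly here: justifying these Stokes integrations on the a.e.\ nonsmooth level sets $S_{t}(u)$ for merely continuous $u,v$, identifying the resulting boundary term with $\mu_{u_{t}}(\psi)$, and checking that the additive constant $\int_{K}\psi\,(dd^{c}u)^{n}$ and the behaviour of $\Psi_{\alpha}$ near $t=0$ do not destroy integrability in $t$. The case $\alpha=-1$ (the Hardy space $H^{2}_{u}$) runs the same way, with the weight $|r|^{\alpha}e^{r}\,dr$ simply removed.
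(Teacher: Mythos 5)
The paper itself offers no proof of this proposition --- it is quoted directly from Poletsky--Stessin \cite[Proposition~3.5]{PS_HardyBergman} --- so your argument has to stand on its own. Its skeleton is the right one and essentially follows the cited source: discard the levels $r<-\delta$ using the finite total Monge--Amp\`ere mass of $u$ and the boundedness of $|f|^2$ on $\{u\le-\delta\}$; use maximality of $u$ off $K=\Supp\left((dd^cu)^n\right)$ together with the domination principle to arrange $Cv\le u$ near $\partial\Omega$; then compare the two norms through the Lelong--Jensen formula.

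However, there is a genuine gap exactly where you flag ``the main obstacle'': the domination $\nu_\psi(u,t)\lesssim\nu_\psi(v,t)$, i.e.\ the comparison of the mixed masses $dd^c\psi\wedge(dd^cu)^{n-1}$ and $dd^c\psi\wedge(dd^cv)^{n-1}$, is the entire content of the proposition for $n\ge2$, and the device you propose does not close it. Your Stokes identity is correct, but by Fubini its left-hand side equals $\int_{-\infty}^t\nu_\psi(u,s)\,ds$ while the surface term on the right is $\mu_{u_t}(\psi)$, so the identity is nothing other than the Lelong--Jensen formula you already wrote down; ``feeding it back'' is circular, and the pointwise bound $0\le t-u\le t-v$ on $B_t(u)\subseteq B_t(v)$ gives no inequality between the two integrals because the wedge factors $(dd^cu)^{n-1}$ and $(dd^cv)^{n-1}$ are different currents. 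What is actually needed is the comparison theorem of Poletsky--Stessin (their Theorem~2.1 and its corollaries): for negative plurisubharmonic exhaustions $w_1\le w_2$ and plurisubharmonic $\psi\ge0$ one has $\mu_{w_2,r}(\psi)\le\mu_{w_1,r}(\psi)$ (suitably interpreted), proved by repeatedly symmetrizing $\int(-w_1)\,dd^c\psi\wedge T$ against $\int(-\psi)\,dd^cw_1\wedge T$ so as to exchange the wedge factors one at a time; without that ingredient or an equivalent, the proof is incomplete. Two smaller points: forcing $Cv\le u$ on \emph{all} of $\Omega$ can fail if $u$ has $-\infty$ poles inside $K$ (it is also unnecessary, since only the comparison on $\{u>-\delta\}$ is used); and the ``pleasant closed form'' $\|f\|^2\asymp\int|f|^2|w|^\alpha e^w\,dw\wedge d^cw\wedge(dd^cw)^{n-1}$ is asserted without justification and drops the interior terms, though nothing hinges on it.
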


\begin{Corollary}
    There exist no continuous plurisubharmonic exhaustion function $v$ for $\mathbb{D}^n$, $n \geq 2$ such that $A^2_{v,\alpha}(\mathbb{D}^n) = A^2_{\alpha}(\mathbb{D}^n)$.
\end{Corollary}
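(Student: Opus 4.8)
The plan is to pit the concrete exhaustion $u(z)=\log\max\{|z_1|,\dots,|z_n|\}$ against an arbitrary one and invoke Proposition~\ref{PS_Inc_Prop}. First I would record that $u$ is a continuous plurisubharmonic exhaustion of $\mathbb{D}^n$ which is maximal on $\mathbb{D}^n\setminus\{0\}$ — it is in fact the pluricomplex Green function of $\mathbb{D}^n$ with pole at the origin, so $(dd^c u)^n=(2\pi)^n\delta_0$ (see, e.g., \cite{KlimekBook}); in particular $(dd^c u)^n$ is compactly supported in $\mathbb{D}^n$. Hence Proposition~\ref{PS_Inc_Prop} applies and yields, for \emph{every} continuous plurisubharmonic exhaustion $v$ of $\mathbb{D}^n$ and every $\alpha\geq-1$, the inclusion $A^2_{v,\alpha}(\mathbb{D}^n)\subseteq A^2_{u,\alpha}(\mathbb{D}^n)$. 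Consequently it suffices to prove that $A^2_\alpha(\mathbb{D}^n)\not\subseteq A^2_{u,\alpha}(\mathbb{D}^n)$, for then any $v$ with $A^2_{v,\alpha}(\mathbb{D}^n)=A^2_\alpha(\mathbb{D}^n)$ would give $A^2_\alpha(\mathbb{D}^n)=A^2_{v,\alpha}(\mathbb{D}^n)\subseteq A^2_{u,\alpha}(\mathbb{D}^n)$, a contradiction.

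To establish the non-inclusion I would compare the two weight sequences. Writing $f=\sum_L a_Lz^L$, the Poletsky--Stessin norm formula on the polydisk gives $\|f\|^2_{A^2_{u,\alpha}(\mathbb{D}^n)}=\sum_L|a_L|^2(|L|+n)^{-(\alpha+1)}$, while $\|f\|^2_{A^2_\alpha(\mathbb{D}^n)}=\sum_L|a_L|^2\prod_{i=1}^n(l_i+1)^{-(\alpha+1)}$. Since both are diagonal Hilbert spaces of holomorphic functions with bounded point evaluations, an inclusion $A^2_\alpha(\mathbb{D}^n)\subseteq A^2_{u,\alpha}(\mathbb{D}^n)$ would (by the closed graph theorem, tested on monomials) force $\sup_L\big(\prod_{i=1}^n(l_i+1)/(|L|+n)\big)^{\alpha+1}<\infty$, which for $n\geq2$ and $\alpha>-1$ fails: along the diagonal $L=(m,\dots,m)$ one has $\prod_i(l_i+1)/(|L|+n)=(m+1)^n/(n(m+1))\asymp m^{n-1}\to\infty$. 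Concretely, the function
\[
f(z)=\sum_{m\geq1}\frac{(m+1)^{n(\alpha+1)/2}}{\sqrt{m}\,\log(m+2)}\,(z_1\cdots z_n)^m
\]
lies in $A^2_\alpha(\mathbb{D}^n)$, because $\|f\|^2_{A^2_\alpha(\mathbb{D}^n)}=\sum_m m^{-1}(\log(m+2))^{-2}<\infty$, yet $\|f\|^2_{A^2_{u,\alpha}(\mathbb{D}^n)}\asymp\sum_m(m+1)^{(n-1)(\alpha+1)}\,m^{-1}(\log(m+2))^{-2}=\infty$ since $(n-1)(\alpha+1)>0$. This $f$ witnesses $A^2_\alpha(\mathbb{D}^n)\not\subseteq A^2_{u,\alpha}(\mathbb{D}^n)$ and the proof concludes as above.

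The only step requiring a word of care is the hypothesis of Proposition~\ref{PS_Inc_Prop}, namely compact support of $(dd^c u)^n$; this is exactly what the identification of $u$ with the pluricomplex Green function of $\mathbb{D}^n$ — equivalently, the maximality of $u$ on $\mathbb{D}^n\setminus\{0\}$, which localizes the Monge--Amp\`ere mass of $u$ to the origin — supplies. I would also flag that the index $\alpha=-1$ is genuinely exceptional, since there $A^2_{u,-1}(\mathbb{D}^n)=H^2_u(\mathbb{D}^n)=H^2(\mathbb{D}^n)=A^2_{-1}(\mathbb{D}^n)$; thus the statement is to be read in the weighted range $\alpha>-1$, which is precisely where $A^2_\alpha(\mathbb{D}^n)$ is the classical weighted Bergman space.
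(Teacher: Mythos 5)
Your argument is correct and is essentially the paper's proof: both reduce, via Proposition~\ref{PS_Inc_Prop} applied to $u(z)=\log\max\{|z_1|,\dots,|z_n|\}$ (whose Monge--Amp\`ere measure is a point mass at the origin, hence compactly supported), to exhibiting a function in $A^2_\alpha(\mathbb{D}^n)\setminus A^2_{u,\alpha}(\mathbb{D}^n)$ whose divergence is detected along the diagonal multi-indices. The only substantive difference is the choice of test function: the paper's coefficients $a_L=\prod_i(l_i+1)^{(\alpha-\epsilon)/2}$ with $\epsilon=\frac{n-1}{n}\alpha$ make the divergence computation work only for $\alpha>0$, whereas your diagonal series with the logarithmic factor covers the full range $\alpha>-1$, and your explicit flagging of the exceptional case $\alpha=-1$ is apt.
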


\begin{proof}
    Let $\epsilon > 0$ and take $u(z) = \log \max \{|z_1|,\ldots,|z_n| \}$. Also, let $f(z) = \sum_{|L|\geq 0} (l_1+1)^{\frac{\alpha-\epsilon}{2}} \ldots (l_n+1)^{\frac{\alpha-\epsilon}{2}} z^L$. Then one might write that:
    \begin{equation*}
        \begin{split}
            ||f||^2_{A^2_{\alpha}(\mathbb{D}^n)} = \sum_{L} \frac{1}{(l_1+1)^{1+\epsilon}} \ldots \frac{1}{(l_n+1)^{1+\epsilon}} < \infty.
        \end{split}
    \end{equation*}
    So $f \in A^2_{\alpha}(\mathbb{D}^n)$. Also:
    \begin{equation} \label{PSNorm_eq1}
        \begin{split}
            ||f||^2_{A^2_{u,\alpha}(\mathbb{D}^n)} = \sum_L \frac{(l_1+1)^{\alpha-\epsilon} \ldots (l_n+1)^{\alpha-\epsilon}}{(|L|+n)^{\alpha + 1}} \geq \sum_{l \geq 0} \frac{(l+1)^{\alpha-\epsilon} \ldots (l+1)^{\alpha-\epsilon}}{(n l+n)^{\alpha + 1}} \gtrsim \\ \sum_{l \geq 0} (l+1)^{(n-1)\alpha - n \epsilon - 1}.
        \end{split}
    \end{equation}
    Note that the right-hand side of \eqref{PSNorm_eq1} diverges whenever $\alpha>0$ and $\epsilon = \frac{n-1}{n} \alpha$, so for such a case $f \not\in A^2_{u,\alpha}(\mathbb{D}^n)$. Now Proposition \ref{PS_Inc_Prop} finishes the proof.
\end{proof}

\begin{Proposition} \label{PEF_Bidisk}
    Let $\zeta \in \mathbb{T}^n$, then the point evaluation functional $\Gamma_{\zeta} : 
    A^2_{u,\alpha}(\mathbb{D}^n) = \mathcal{D}_{\beta}(\mathbb{D}^n) \rightarrow \mathbb{C}$, $f \mapsto f(\zeta)$ is bounded for $\beta  > n$.
\end{Proposition}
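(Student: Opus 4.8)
\emph{Proof idea.} The plan is to reproduce the Cauchy--Schwarz argument of Proposition~\ref{PEF boundedness for Ellipsoid}, which in the bidisk case becomes completely elementary: since $\zeta$ lies on the \emph{distinguished} boundary $\mathbb{T}^n$, every monomial satisfies $|\zeta^L| = 1$, so the delicate estimate on a sum of Gamma quotients (which on the ellipsoid forced us to invoke Laplace's method, Lemma~\ref{Laplace Method}) degenerates here into a bare count of lattice points. First I would recall, from the polydisk example, that with $u(z) = \log\max\{|z_1|,\dots,|z_n|\}$ and $\beta = -(\alpha+1)$ the norm on $\mathcal{D}_\beta(\mathbb{D}^n) = A^2_{u,\alpha}(\mathbb{D}^n)$ is $\|f\|^2_{\mathcal{D}_\beta(\mathbb{D}^n)} = \sum_L |a_L|^2 (|L|+n)^{\beta}$ for $f = \sum_L a_L z^L$; in particular $\|z^L\|^2_{\mathcal{D}_\beta(\mathbb{D}^n)} = (|L|+n)^{\beta}$.

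Next, interpreting $\Gamma_\zeta$ through Taylor coefficients (as in the Remark following Theorem~\ref{PEF_non_cyc}), $\Gamma_\zeta(f) = \sum_L a_L \zeta^L$, I would apply Cauchy--Schwarz with weights $(|L|+n)^{\pm\beta/2}$ and use $|\zeta^L| = 1$:
\begin{equation*}
|\Gamma_\zeta(f)|^2 \;\le\; \Big(\sum_L |a_L|^2 (|L|+n)^{\beta}\Big)\Big(\sum_L (|L|+n)^{-\beta}\Big) \;=\; \|f\|^2_{\mathcal{D}_\beta(\mathbb{D}^n)} \sum_L (|L|+n)^{-\beta}.
\end{equation*}
It then remains to check that the tail sum converges exactly when $\beta > n$. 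Grouping by $|L| = j$ and recalling that the number of multi-indices $L$ with nonnegative integer entries and $|L| = j$ equals $\binom{j+n-1}{n-1}$, a polynomial in $j$ of degree $n-1$, one obtains
\begin{equation*}
\sum_L (|L|+n)^{-\beta} \;=\; \sum_{j \ge 0} \binom{j+n-1}{n-1}(j+n)^{-\beta} \;\asymp\; \sum_{j \ge 0} (j+1)^{\,n-1-\beta},
\end{equation*}
which is finite if and only if $n-1-\beta < -1$, i.e. $\beta > n$. Therefore, for $\beta > n$, the series defining $\Gamma_\zeta(f)$ converges absolutely and $\|\Gamma_\zeta\| \le \big(\sum_L (|L|+n)^{-\beta}\big)^{1/2} < \infty$, which is the assertion.

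There is essentially no obstacle beyond routine bookkeeping: one must be careful only about the normalization relating $\beta$ and $\alpha$ when reading off the monomial norm, and about the elementary fact that $\binom{j+n-1}{n-1}$ grows polynomially of degree $n-1$ in $j$. The only conceptual point worth making is the contrast with the ellipsoid: there $\partial\Omega_p$ is not the Shilov boundary and $|\zeta^J|^2$ genuinely varies over it, which is precisely what made the analogous sum $S(j)$ in Proposition~\ref{PEF boundedness for Ellipsoid} nontrivial; on the bidisk the torus \emph{is} the Shilov boundary, and that difficulty disappears entirely.
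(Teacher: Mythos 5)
Your argument is exactly the paper's proof: apply Cauchy--Schwarz to $\sum_L a_L\zeta^L$ with the weights $(|L|+n)^{\pm\beta/2}$, use $|\zeta^L|=1$ on the distinguished boundary, and observe that $\sum_L(|L|+n)^{-\beta}$ converges precisely for $\beta>n$. The only difference is that you spell out the lattice-point count $\binom{j+n-1}{n-1}\asymp j^{n-1}$ justifying the convergence threshold, which the paper leaves implicit; this is a welcome addition, not a deviation.
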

\begin{proof}
    Let $J$ be a multi-index $f(z) = \sum_J a_J z^J \in \mathcal{D}_{\beta}(\mathbb{D}^n)$ and apply Holder inequality on $|\sum_J a_J \zeta^J|^2$ we will have:
    \begin{align*}
        \left| \sum_J a_J \zeta^J \right|^2 &\leq 
        \left( \sum_J |a_J|^2 (|J|+2)^\beta \right) \cdot \left( \sum_J \frac{1}{(|J|+2)^\beta}  |\zeta^J|^2 \right) \\
        &= ||f||^2_{\beta} \left( \sum_J \frac{1}{(|J|+2)^\beta}  \right). \\
    \end{align*}
    The last sum on the right-hand side is finite when $\beta > n$, so we are done.
\end{proof}
\begin{Proposition} \label{Algebra_Prop}
$A^2_{u,\alpha}(\mathbb{D}^n) = \mathcal{D}_{\beta}(\mathbb{D}^n)$ is an algebra when $\beta = -(\alpha + 1) > n$.
\end{Proposition}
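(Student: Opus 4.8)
The plan is to prove the stronger statement that multiplication is submultiplicative up to a constant: there is $C_\beta<\infty$ with $\|fg\|_{\mathcal D_\beta}\le C_\beta\|f\|_{\mathcal D_\beta}\|g\|_{\mathcal D_\beta}$ for all $f,g\in\mathcal D_\beta(\mathbb D^n)$. This both shows closure under products and, after rescaling the norm by $C_\beta$, exhibits $\mathcal D_\beta(\mathbb D^n)$ as a commutative unital Banach algebra. Recall that for $\beta=-(\alpha+1)>n$ the space is defined by the coefficient norm $\big\|\sum_L a_L z^L\big\|_{\mathcal D_\beta}^2=\sum_L|a_L|^2(|L|+n)^\beta$, and that since these coefficients are square-summable the power series converges on $\mathbb D^n$, so every element is a genuine holomorphic function whose Taylor coefficients we may manipulate. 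Writing $f=\sum_L a_Lz^L$ and $g=\sum_M b_Mz^M$, the Taylor coefficients of $fg$ are the Cauchy products $c_N=\sum_{L+M=N}a_Lb_M$.

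First I would split each summand as $a_Lb_M=\big(a_L(|L|+n)^{\beta/2}\,b_M(|M|+n)^{\beta/2}\big)\cdot\big((|L|+n)^{-\beta/2}(|M|+n)^{-\beta/2}\big)$ and apply the Cauchy–Schwarz inequality to the finite sum defining $c_N$, obtaining
\[
|c_N|^2\le\Big(\sum_{L+M=N}|a_L|^2(|L|+n)^\beta\,|b_M|^2(|M|+n)^\beta\Big)\,\Big(\sum_{L+M=N}(|L|+n)^{-\beta}(|M|+n)^{-\beta}\Big).
\]
Multiplying by $(|N|+n)^\beta$, summing over $N$, and interchanging the order of summation in the first factor reduces the whole estimate to the single uniform bound
\[
K_N:=(|N|+n)^\beta\sum_{L+M=N}(|L|+n)^{-\beta}(|M|+n)^{-\beta}\le C_\beta<\infty\qquad\text{for every }N,
\]
since then $\|fg\|_{\mathcal D_\beta}^2\le C_\beta\sum_{L,M}|a_L|^2(|L|+n)^\beta|b_M|^2(|M|+n)^\beta=C_\beta\|f\|_{\mathcal D_\beta}^2\|g\|_{\mathcal D_\beta}^2$.

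Establishing this combinatorial bound is the crux of the argument and the step I expect to be the main obstacle. Because $(|L|+n)+(|M|+n)=|N|+2n$, at least one of the two factors is $\ge(|N|+n)/2$, so $K_N\le 2^\beta\sum_{L+M=N}\min(|L|+n,|M|+n)^{-\beta}$. I would then parametrise the pairs $(L,M)$ with $L+M=N$ by whichever of $L,M$ has the smaller length, using that the number of multi-indices of length $j$ that are dominated by $N$ is at most $\binom{j+n-1}{n-1}\asymp(j+1)^{n-1}$; this yields $K_N\le 2^{\beta+1}\sum_{j\ge0}(j+1)^{n-1}(j+n)^{-\beta}$, a series that converges exactly when $\beta>n$. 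The care required here is to count multi-indices dominated by $N$ (not all multi-indices of a given length) and to split the sum according to whether $|L|\le|N|/2$ or $|M|\le|N|/2$, so that in each regime $\min(|L|+n,|M|+n)$ is identified with the smaller summand's weight.

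Finally, holomorphy of $fg$ on $\mathbb D^n$ is automatic, the constant function $1$ lies in $\mathcal D_\beta(\mathbb D^n)$ and acts as the unit, and polynomials are dense, so the displayed norm inequality completes the proof that $\mathcal D_\beta(\mathbb D^n)$ is an algebra.
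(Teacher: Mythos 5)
Your argument is correct and follows essentially the same route as the paper's proof: Cauchy--Schwarz applied to the Cauchy product of the coefficients, followed by the uniform bound $(|N|+n)^\beta\sum_{L+M=N}(|L|+n)^{-\beta}(|M|+n)^{-\beta}\le C_\beta$, which in both cases reduces to the convergence of $\sum_L(|L|+n)^{-\beta}\asymp\sum_{j\ge0}(j+1)^{n-1-\beta}$ for $\beta>n$. The only cosmetic difference is that you isolate the larger of the two factors via a max/min splitting, while the paper uses the identity $\frac{|N|+2n}{(|L|+n)(|M|+n)}=\frac{1}{|L|+n}+\frac{1}{|M|+n}$ together with Jensen's inequality.
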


\begin{proof}
Let $f(z) = \sum_{|L| \geq 0} a_L z^L$ and $g(z) = \sum_{|L| \geq 0} b_L z^L$ be holomorphic functions in $\mathcal{D}_{\beta}(\mathbb{D}^n)$. Their product is:
\[
f(z)g(z) = \sum_{|L| \geq 0} \sum_{L_1 + L_2 = L} a_{L_1} b_{L_2} z^L.
\]
The squared norm of \( f \cdot g \) is:
\[
||f \cdot g||^2 = \sum_{|L|\geq 0} (|L| + n)^{\beta} \left| \sum_{L_1 + L_2 = L} a_{L_1} b_{L_2} \right|^2.
\]
Applying Hölder’s inequality:
\begin{align*}
\left| \sum_{L_1 + L_2 = L} a_{L_1} b_{L_2} \right|^2 
&\leq \left( \sum_{L_1 + L_2 = L} \frac{1}{(|L_1| + n)^{\beta} (|L_2| + n)^{\beta}} \right) \\
&\quad \cdot \left( \sum_{L_1 + L_2 = L} |a_{L_1}|^2 (|L_1| + n)^{\beta} |b_{L_2}|^2 (|L_2| + n)^{\beta} \right).
\end{align*}
Thus:
\begin{equation*}
\begin{aligned}
\|f \cdot g\|^2 
&\leq \sum_{L} (|L| + n)^{\beta} 
 \left( \sum_{L_1 + L_2 = L} \frac{1}{(|L_1| + n)^{\beta} (|L_2| + n)^{\beta}} \right) 
 \left( \sum_{L_1 + L_2 = L} |a_{L_1}|^2 (|L_1| + n)^{\beta} |b_{L_2}|^2 (|L_2| + n)^{\beta} \right) \\
&\leq \sum_{|L| \geq 0} \left( \frac{|L| + n}{|L| + 2n} \right)^{\beta} 
 \sum_{L_1 + L_2 = L} \left( \frac{1}{(|L_1| + n)} + \frac{1}{(|L_2| + n)} \right)^\beta \\& \cdot  \left( \sum_{L_1 + L_2 = L} |a_{L_1}|^2 (|L_1| + n)^{\beta} |b_{L_2}|^2 (|L_2| + n)^{\beta} \right) \\
&\leq \sum_{|L| \geq 0} \sum_{L_1 + L_2 = L} \left( \frac{1}{(|L_1| + n)} + \frac{1}{(|L_2| + n)} \right)^\beta  \left( \sum_{L_1 + L_2 = L} |a_{L_1}|^2 (|L_1| + n)^{\beta} |b_{L_2}|^2 (|L_2| + n)^{\beta} \right) \\
&\leq C_{\beta} \sum_{|L| \geq 0} \sum_{L_1 + L_2 = L} \left( \frac{1}{(|L_1| + n)^\beta} + \frac{1}{(|L_2| + n)^\beta} \right) \cdot \left( \sum_{L_1 + L_2 = L} |a_{L_1}|^2 (|L_1| + n)^{\beta} |b_{L_2}|^2 (|L_2| + n)^{\beta} \right) \\
&\leq 2 C_{\beta} \left( \sum_{|L| \geq 0}  \frac{1}{(|L| + n)^\beta} \right) \cdot \left( \sum_{|L| \geq 0} \sum_{L_1 + L_2 = L} |a_{L_1}|^2 (|L_1| + n)^{\beta} |b_{L_2}|^2 (|L_2| + n)^{\beta} \right) \\
& \leq 2 C_{\beta} \sum_{|L| \geq 0}  \frac{1}{(|L| + n)^\beta}  \|f\|^2 \|g\|^2.
\end{aligned}
\end{equation*}
Where $C_\beta = 2^{\beta - 1}$ is obtained from Jensen's inequality. So for \(\beta > n\), \( f \cdot g \in A^2_{\alpha}(\mathbb{D}^n) \).
\end{proof}
\begin{Proposition} \label{cuma lemma}
The function \( f_0(z) = 1 - \frac{z_1 + z_2}{2} \) is cyclic in \( A^2_{u, \alpha}(\mathbb{D}^2) = \mathcal{D}_{\beta}(\mathbb{D}^2) \) when \( \beta = -(\alpha+1) \leq \frac{3}{2} \). 
\end{Proposition}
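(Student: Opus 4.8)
The plan is to apply the radial-dilation criterion, Theorem~\ref{Radial_Dilation}. Since $|z_1+z_2|/2<1$ on $\mathbb{D}^2$, the polynomial $f_0$ does not vanish on $\mathbb{D}^2$, so the criterion applies provided we can bound $\sup_{0<r<1}\bigl\|f_0/(f_0)_r\bigr\|_{\mathcal{D}_\beta}$, where $(f_0)_r(z)=1-\tfrac{r(z_1+z_2)}{2}$. The strategy is to expand this quotient explicitly as a power series and estimate its $\mathcal{D}_\beta$-norm term by term, using the monomial norm $\|z_1^jz_2^{k-j}\|_{\mathcal{D}_\beta}^2=(k+2)^{\beta}$ from the polydisk example of Section~2.

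First I would write, with $w=\tfrac{z_1+z_2}{2}$,
\[
\frac{f_0}{(f_0)_r}=\frac{1-w}{1-rw}=1-(1-r)\sum_{k\ge1}r^{k-1}w^k,
\qquad w^k=2^{-k}\sum_{j=0}^k\binom{k}{j}z_1^jz_2^{k-j},
\]
so that the coefficient of $z_1^jz_2^{k-j}$ (for $k\ge1$) equals $-(1-r)r^{k-1}2^{-k}\binom{k}{j}$. Summing the squared coefficients against the weights $(k+2)^\beta$ gives
\[
\Bigl\|\frac{f_0}{(f_0)_r}\Bigr\|_{\mathcal{D}_\beta}^2
=\|1\|_{\mathcal{D}_\beta}^2+(1-r)^2\sum_{k\ge1}r^{2(k-1)}(k+2)^{\beta}\,4^{-k}\sum_{j=0}^k\binom{k}{j}^2 .
\]

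Next I would use $\sum_{j=0}^k\binom{k}{j}^2=\binom{2k}{k}\asymp 4^k/\sqrt{k}$ (Stirling), so that $4^{-k}\sum_j\binom{k}{j}^2\asymp k^{-1/2}$ and the norm is $\asymp 1+(1-r)^2\sum_{k\ge1}r^{2(k-1)}(k+2)^{\beta-1/2}$. Finally I would invoke the elementary asymptotic $\sum_{k\ge1}r^{2(k-1)}(k+2)^{\gamma}\asymp(1-r)^{-(\gamma+1)}$ as $r\to1^-$ when $\gamma>-1$ (and $\lesssim\log\tfrac{1}{1-r}$ when $\gamma=-1$, bounded when $\gamma<-1$), applied with $\gamma=\beta-\tfrac12$. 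In the range $-\tfrac12<\beta\le\tfrac32$ this yields $\bigl\|f_0/(f_0)_r\bigr\|_{\mathcal{D}_\beta}^2\asymp 1+(1-r)^{3/2-\beta}$, which stays bounded as $r\to1^-$ exactly because $3/2-\beta\ge0$; for $\beta\le-\tfrac12$ the factor $(1-r)^2$ already absorbs the at-most-logarithmic growth of the sum. Hence $\sup_{0<r<1}\bigl\|f_0/(f_0)_r\bigr\|_{\mathcal{D}_\beta}<+\infty$ for every $\beta\le\tfrac32$, and Theorem~\ref{Radial_Dilation} gives that $f_0$ is cyclic.

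All the steps are routine computations; the only place demanding a little care is the passage through the central binomial coefficient and the growth estimate for the power series in $r$, and this is precisely where the threshold $\beta=\tfrac32$ arises rather than the value $1$ one might expect from a one-variable Dirichlet space: the loss of $\tfrac12$ is exactly the factor $k^{-1/2}$ from $\binom{2k}{k}\asymp4^k/\sqrt k$, reflecting the diagonal concentration of $\bigl(\tfrac{z_1+z_2}{2}\bigr)^k$ on $\mathbb{T}^2$.
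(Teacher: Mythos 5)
Your proof is correct, but it takes a genuinely different route from the paper. The paper does not estimate $\|f_0/(f_0)_r\|$ at all; instead it introduces the composition operator $C_F\colon \mathcal{D}_{\beta-\frac12}(\mathbb{D})\to\mathcal{D}_{\beta}(\mathbb{D}^2)$, $f\mapsto f\circ F$ with $F(z)=\tfrac{z_1+z_2}{2}$, shows via the same computation $\|(z_1+z_2)^j\|^2_{\mathcal{D}_\beta(\mathbb{D}^2)}=(j+2)^\beta\binom{2j}{j}\asymp 4^j(j+2)^{\beta-1/2}$ that $\|C_F(f)\|_{\mathcal{D}_\beta(\mathbb{D}^2)}\asymp\|f\|_{\mathcal{D}_{\beta-1/2}(\mathbb{D})}$, and then transfers the known one-variable cyclicity of $1-z$ in $\mathcal{D}_{s}(\mathbb{D})$ for $s\le 1$: if $p_n(1-z)\to 1$ there, then $C_F(p_n)\,f_0\to 1$ in $\mathcal{D}_\beta(\mathbb{D}^2)$. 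So the two arguments hinge on exactly the same central-binomial asymptotic $\binom{2k}{k}\asymp 4^k/\sqrt{k}$ — which is where the shift from the one-variable threshold $1$ to $3/2$ comes from in both proofs — but package it differently. The paper's transference argument buys generality (any cyclic function of the single variable $w=\tfrac{z_1+z_2}{2}$ pushes forward) at the cost of importing the one-variable Brown--Shields-type result; your direct verification of the radial-dilation criterion of Theorem~\ref{Radial_Dilation} is self-contained and parallels how the paper itself treats the ellipsoid (Proposition~\ref{f_cyc_ellipsoid}) and the polynomial $\zeta-z_i$ (Proposition~\ref{1-z_Prop}). Your series expansion $\frac{1-w}{1-rw}=1-(1-r)\sum_{k\ge1}r^{k-1}w^k$, the resulting squared norm $\asymp 1+(1-r)^2\sum_{k\ge1}r^{2(k-1)}(k+2)^{\beta-1/2}$, and the case analysis in $\gamma=\beta-\tfrac12$ around $\gamma=-1$ are all accurate, and the boundedness for $\beta\le\tfrac32$ follows as you say.
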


\begin{proof}
Let $F:\mathbb{D}^2 \rightarrow \mathbb{D}$ be $(z_1,z_2) \mapsto \frac{z_1+z_2}{2}$ and define $C_F:\mathcal{D}_{\beta - \frac{1}{2}}(\mathbb{D}) \rightarrow \mathcal{D}_{\beta }(\mathbb{D}^2)$ as $f \mapsto f \circ F$. Then for $f(z) = \sum_{j \geq 0} a_j z^j$ we may write:
\begin{equation} \label{C_F closedness}
    \begin{aligned}
        \left\| C_F(f) \right\|_{\mathcal{D}_{\beta }(\mathbb{D}^2)} 
        &= \sum_{j \geq 0} |a_j|^2 \frac{1}{2^{2j}} \| (z_1 + z_2)^j \|_{\mathcal{D}_{\beta }(\mathbb{D}^2)}^2 \\
        &=  \sum_{j \geq 0} |a_j|^2 \frac{1}{2^{2j}} (j+2)^\beta \sum_{j_1 = 0}^j \binom{j}{j_1}^2 \\
        &=  \sum_{j \geq 0} |a_j|^2 \frac{1}{2^{2j}} \binom{2j}{j} (j+2)^\beta  \\
        &\asymp \sum_{j \geq 0} |a_j|^2 \frac{1}{2^{2j}} \frac{2^{2j}}{(j+2)^{\frac{1}{2}}} (j+2)^\beta \\
        &= \sum_{j \geq 0} |a_j|^2 (j+2)^{\beta - \frac{1}{2}} \\
        &\asymp \| f \|_{\mathcal{D}_{\beta - \frac{1}{2}}(\mathbb{D})}^2.
    \end{aligned}
\end{equation}
This proves that $C_F$ is bounded. Moreover, $B = \operatorname{Im}(C_F)$ is a closed subspace of $\mathcal{D}_{\beta}(\mathbb{D}^2)$. This implies the cyclicity of $f_0$ whenever $\beta \leq \frac{3}{2}$, since it is the image of the function $g(z) = 1 - z$ under $C_F$, which is cyclic in $\mathcal{D}(\mathbb{D})$. In this case, there exists a sequence of polynomials $\{p_n\}$ such that $p_n g \to 1$ in $\mathcal{D}(\mathbb{D})$, and the corresponding sequence $\{q_n = C_F(p_n)\}$ can be used to approximate $1$ via $q_n f_0$ in $\mathcal{D}_{\beta}(\mathbb{D}^2)$.
\end{proof}

\begin{Remark}
    The function $f(z) = 1 - \frac{z_1+z_2}{2}$ is noncyclic whenever $\beta > 2$ as it has a zero on the boundary and the point evaluation functional is bounded for any point on $\overline{\mathbb{D}^2}$ by Proposition \ref{PEF_Bidisk}. Proposition \ref{cuma lemma} answers the cyclicity problem of this function for $\beta \leq \frac{3}{2}$, and the case $\frac{3}{2} < \beta \leq 2$ remains unsolved. In fact, similar to the Corollary \ref{Ellipsoid_Results} we have the same cyclicity situation for any polynomial having finitely many zeros on $\mathbb{T}^2$.
    
    Note that the proof of the Proposition \ref{f_cyc_ellipsoid} cannot be applied for the case of the bidisk by letting $p \rightarrow{\infty}$ because the asymptotic constants there are dependent on $p$ and they go to $\infty$ in that case.
\end{Remark}
\begin{Proposition} \label{1-z_Prop}
    If $f(z) = \zeta - z_i,\ i=1,2$, where $\zeta \in \mathbb{T}$ then $f$ is cyclic in $\mathcal{D}_{\beta }(\mathbb{D}^2)$ if and only if $\beta \leq 1$.
\end{Proposition}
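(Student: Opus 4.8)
The plan is to reduce to the model case $f(z)=1-z_1$ and then prove the two implications by separate, essentially one‑variable, arguments. Since $|\zeta|=1$ we may write $\zeta-z_i=\zeta\,(1-\bar\zeta z_i)$; multiplying by the unimodular constant $\zeta$ does not change the cyclic subspace, and the substitution $z_i\mapsto\bar\zeta z_i$ together with the coordinate swap $z_1\leftrightarrow z_2$ is an isometric automorphism of $\mathcal{D}_\beta(\mathbb D^2)$ (the norm $\|\sum_L a_L z^L\|^2=\sum_L|a_L|^2(|L|+2)^{\beta}$ depends only on $|a_L|$ and $|L|$) carrying cyclic vectors to cyclic vectors. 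Thus it suffices to decide the cyclicity of $f(z)=1-z_1$.

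\textbf{Sufficiency ($\beta\le 1$).} Here I would apply Theorem~\ref{Radial_Dilation}. The polynomial $f=1-z_1$ does not vanish on $\mathbb D^2$, and for $0<r<1$
\[
\frac{f}{f_r}=\frac{1-z_1}{1-r z_1}=1+\sum_{j\ge 1}\bigl(r^{j}-r^{j-1}\bigr)z_1^{j},
\]
whose Taylor coefficients decay geometrically, so $f/f_r\in\mathcal{D}_\beta(\mathbb D^2)$. Using $\|z_1^{j}\|_{\mathcal{D}_\beta(\mathbb D^2)}^2=(j+2)^{\beta}$ one gets
\[
\Bigl\|\tfrac{f}{f_r}\Bigr\|_{\mathcal{D}_\beta(\mathbb D^2)}^2=2^{\beta}+(1-r)^2\sum_{j\ge 1}r^{2(j-1)}(j+2)^{\beta}.
\]
By the elementary asymptotic $\sum_{j\ge 1}x^{j}j^{\beta}\asymp(1-x)^{-(\beta+1)}$ as $x\to1^-$ when $\beta>-1$ (the series remaining bounded when $\beta\le-1$), applied with $x=r^2$, this quantity is $\asymp 1+(1-r)^{1-\beta}$ as $r\to1^-$ and is bounded on compact subsets of $(0,1)$; hence $\sup_{0<r<1}\|f/f_r\|_{\mathcal{D}_\beta(\mathbb D^2)}<\infty$ precisely when $\beta\le 1$, and Theorem~\ref{Radial_Dilation} yields cyclicity.

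\textbf{Necessity ($\beta>1$).} The idea is to exhibit a bounded linear functional that annihilates $[f]$ but not the constant $1$. For $g(z)=\sum_{l_1,l_2\ge 0}a_{l_1,l_2}z_1^{l_1}z_2^{l_2}\in\mathcal{D}_\beta(\mathbb D^2)$ set $\Lambda(g)=\sum_{l_1\ge 0}a_{l_1,0}$. By Cauchy--Schwarz, $\sum_{l_1\ge0}|a_{l_1,0}|\le\|g\|_{\mathcal{D}_\beta(\mathbb D^2)}\bigl(\sum_{l_1\ge0}(l_1+2)^{-\beta}\bigr)^{1/2}$, and the last series converges exactly because $\beta>1$; so $\Lambda$ is a well‑defined bounded functional on $\mathcal{D}_\beta(\mathbb D^2)$. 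If $p\in\mathbb C[z_1,z_2]$, the part of $pf=p\,(1-z_1)$ of $z_2$‑degree $0$ equals $p_0(z_1)(1-z_1)$ with $p_0$ the $z_2$‑degree‑$0$ part of $p$, hence $\Lambda(pf)=p_0(1)(1-1)=0$. By continuity $\Lambda$ vanishes on $[f]=\overline{\Span\{pf:p\in\mathbb C[z_1,z_2]\}}$, while $\Lambda(1)=1$; therefore $1\notin[f]$ and $f$ is not cyclic. Combining the two parts, $f$ is cyclic iff $\beta\le1$.

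\textbf{On the difficulty.} Neither implication poses a real obstacle; the only delicate points are the asymptotic analysis of the dilation norm — in particular pinning down the exponent $1-\beta$ so that the endpoint $\beta=1$ still gives a bounded supremum — and verifying that $\Lambda$ is genuinely defined on all of $\mathcal{D}_\beta(\mathbb D^2)$ (not merely on polynomials), which is again exactly the condition $\beta>1$ and is what places $\beta=1$ on the cyclic side.
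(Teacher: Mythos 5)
Your proof is correct. The sufficiency half ($\beta\le 1$) is essentially identical to the paper's: the same expansion of $f/f_r$, the same coefficient computation $(1-r)^2\sum_{j\ge1}r^{2(j-1)}(j+2)^\beta\asymp(1-r)^{1-\beta}$, and the same appeal to Theorem~\ref{Radial_Dilation}; your reduction to $f=1-z_1$ via the unimodular rotation and coordinate swap just makes explicit the paper's ``without loss of generality.'' The necessity half ($\beta>1$) takes a genuinely different route. The paper invokes its capacity criterion (Theorem~\ref{capacity condition}): it takes $\mu$ to be the uniform probability measure on $\{1\}\times\mathbb{T}=\mathcal{Z}(f)\cap\mathbb{T}^2$, computes the energy $I_\beta[\mu]=\sum_{j\ge0}(j+2)^{-\beta}<\infty$, and concludes that the zero set has positive $\beta$-capacity. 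You instead exhibit the explicit annihilating functional $\Lambda(g)=\sum_{l_1\ge0}a_{l_1,0}$, check its boundedness by Cauchy--Schwarz (which is exactly the convergence of $\sum(l_1+2)^{-\beta}$, i.e.\ the same series as the paper's energy integral), and observe that $\Lambda(pf)=p_0(1)\cdot 0=0$ while $\Lambda(1)=1$. These are really two faces of the same computation --- your $\Lambda$ is the pairing against the Cauchy transform $C[\mu]$ of the paper's measure, since integrating out $z_2$ over $\mathbb{T}$ and evaluating at $z_1=1$ produces precisely $\sum_{l_1}a_{l_1,0}$ --- but your version is self-contained and avoids relying on Theorem~\ref{capacity condition}, whose proof the paper only sketches by reference to Brown--Shields. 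What the paper's formulation buys is generality: the capacity criterion applies to zero sets with no product structure, where no such explicit functional is available. For this particular proposition your argument is arguably cleaner.
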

\begin{proof}
    Without loss of generality, we can prove the statement only for $f(z) = 1 - z_1$. Assume that $\beta \leq 1$. To prove the cyclicity of $f$, as in Proposition~\ref{f_cyc_ellipsoid}, we need to estimate the norm of $f/f_r$ and then apply Theorem~\ref{Radial_Dilation}. 
    \begin{equation*}
        \sup_{0<r<1} \left\| \frac{f}{f_r} \right\|^2_{\mathcal{D}_{\beta }(\mathbb{D}^2)}
        = 1 + \sup_{0<r<1} (1-r)^2 \sum_{j \geq 1} (j+2)^{\beta} r^{2j - 2} 
        \asymp 1 + \sup_{0<r<1} (1-r)^{1 - \beta} < +\infty.
    \end{equation*}
    Now let $\beta > 1$, our goal is to show that the capacity of $\mathcal{Z}(f) \cap \mathbb{T}^2$ is positive to apply Theorem\ref{capacity condition}. Assume that $\mu$ is the uniform probability measure on $\{1\}\times \mathbb{T} = \mathcal{Z}(f) \cap \mathbb{T}^2$ we will have:
    \begin{equation*}
        I_{\beta}[\mu] = \iint_{\mathbb{T}^2 \times \mathbb{T}^2} k_{\beta}(z,w) d\mu(z)d\mu(w) = \iint_{\mathbb{T}\times\mathbb{T}} \sum_{|J| \geq 0} \frac{z_2^{j_2} \bar{w}_2^{j_2}}{(|J|+2)^\beta} d\mu(z_2) d\mu(w_2) = \sum_{j_1 \geq 0} \frac{1}{(j_1+2)^{\beta}} < +\infty.
    \end{equation*}
    This implies that the capacity of $\mathcal{Z}(f) \cap \mathbb{T}^2$ is positive.
\end{proof}
\begin{Remark} \label{Inf_Zero_Remark}
    To complete the characterization of cyclic polynomials in $\mathcal{D}_{\beta}(\mathbb{D}^2)$, it remains to consider the case of irreducible polynomials that do not vanish on $\mathbb{D}^2$ but vanish at infinitely many points on $\mathbb{T}^2$. We have already treated the case of polynomials of the form $\zeta - z_i$. If $f$ is not of this form, the treatment of this case is very similar to the approach discussed in \cite{Beneteau2016_bidisk}; we include a sketch of the reasoning here for completeness.

    To prove that such an $f$ is cyclic for $\beta \leq 1$, one can show that there exists a unitary $(n+m) \times (n+m)$ matrix $U$ and a column vector polynomial $B \in \mathbb{C}^{n+m}[z_1, z_2]$ such that
    \[
        \left( I - A(z) \right) B(z) \in f(z) \, \mathbb{C}^{n+m}[z_1, z_2],
    \]
    where
    \[
        A(z) = U \begin{bmatrix}
        I_n z_1 & 0 \\
        0 & I_m z_2
        \end{bmatrix}.
    \]
    Moreover, there exists a row vector polynomial $a \in \mathbb{C}^{n+m}[z_1, z_2]$ such that $p(z_2) = a(z) B(z)$ is a one-variable polynomial that does not vanish on $\mathbb{D}$. Now, let $g \in [f]^\perp$. For any row vector polynomial $v \in \mathbb{C}^{n+m}$, one can show that, for all integers $k \geq 0$,
    \begin{equation*}
        \langle g, v B \rangle = \langle g, v A^k B \rangle.
    \end{equation*}
    If we choose $d = n + m + 1$, it follows that the family $\{ v A^{kd} B \}_{k \geq 0}$ is orthogonal. Moreover, we have the estimate $\| v A^{kd} B \| \lesssim (k d + 1)^\beta$. These observations imply that
    \begin{equation*}
        \| g \|^2 \geq \sum_{k \geq 0} \frac{ \left| \langle g, v A^k B \rangle \right|^2 }{ \| v A^k B \|^2 } \gtrsim \sum_{k \geq 0} \frac{1}{(k+1)^\beta} \left| \langle g, v B \rangle \right|^2.
    \end{equation*}
    Since the series on the right diverges for $\beta < 1$, it follows that $\langle g, v B \rangle = 0$. Finally, by taking $v = z_1^{j_1} z_2^{j_2} a$, and using the fact that $p(z_2) = a(z) B(z)$ is cyclic, we conclude that $g = 0$.

    For $\beta > 1$, it is known that $f$ is not cyclic in the Dirichlet-type space $\mathfrak{D}_{\beta/2}(\mathbb{D}^2)$ studied in \cite{Beneteau2016_bidisk}. Moreover, the trivial inequality $j_1 j_2 \leq (j_1 + j_2)^2$ implies that
    \[
        \| h \|_{\mathfrak{D}_{\beta/2}(\mathbb{D}^2)} \leq \| h \|_{\mathcal{D}_{\beta}(\mathbb{D}^2)}
    \]
    for any polynomial $h$. Therefore, $f$ is also non-cyclic in $\mathcal{D}_{\beta}(\mathbb{D}^2)$.
\end{Remark}

\section{Discussion and Open Problems}
Comparing our main result on the bidisk with the one in \cite{Beneteau2016_bidisk}, we observe that, in our case, the cyclicity of a polynomial is completely determined by the dimension of its zero set on the distinguished boundary. In contrast, in \cite{Beneteau2016_bidisk}, polynomials of the form $\zeta - z_i$ exhibit a different cyclicity behavior compared to other polynomials that vanish at infinitely many points on $\mathbb{T}^2$.
\begin{OpenProblem}
    For $ \frac{3}{2} < \beta \leq 2$, determine whether the polynomial $f(z) = 1 - \frac{z_1 + z_2}{2}$ is cyclic in $\mathcal{D}_\beta(\mathbb{D}^2)$.
\end{OpenProblem}

\begin{OpenProblem}
    Let $f$ be a polynomial that does not vanish on $\Omega_p$ and vanishes on a set of real dimension one on $\partial \Omega_p$. Determine under what conditions $f$ is cyclic in $\mathcal{D}_\beta(\Omega_p)$.
\end{OpenProblem}

\section*{Acknowledgements}
The author would like to express sincere gratitude to Professors Łukasz Kosiński and Pascal Thomas, the author’s PhD advisors, for their invaluable guidance and ongoing support. The author is particularly grateful to Professor Kosiński for generously sharing the original idea and to Professor Thomas for his substantial help in developing it further. The author also thanks Ahmed Yekta Okten and Athanasios Beslikas for insightful mathematical discussions. Finally, the author acknowledges the Institut de Mathématiques de Toulouse and the Department of Mathematics at Jagiellonian University for providing a stimulating research environment.

\end{document}